\DeclareMathAlphabet{\mathpzc}{OT1}{pzc}{m}{it}
\newtheorem{theorem}{Theorem}[section]
\newtheorem{lemma}[theorem]{Lemma}
\newtheorem{definition}[theorem]{Definition}
\newtheorem{corollary}[theorem]{Corollary}
\newtheorem{proposition}[theorem]{Proposition}
\newtheorem{remark}[theorem]{Remark}
\newtheorem{result}[theorem]{Result}
\def\K{\mathcal K}
\def\d{\displaystyle}
\def\pco{\text{{\it p}{\rm -co}}}
\def\co{{\rm co}}
\def\eps{\varepsilon}
\def\m{\mathpzc{m}}
\def\ker{{\rm ker\, }}
\def\v{\mathpzc{v}}
\title[]{The ideal of $p$-compact operators: \\ a tensor product approach}
\author{Daniel Galicer, Silvia Lassalle and Pablo Turco}
\thanks{This project was supported in part by UBACyT X038, UBACyT X218  and CONICET PIP 0624}
\address{Departamento de Matem\'{a}tica - Pab I,
Facultad de Cs. Exactas y Naturales, Universidad de Buenos Aires,
(1428) Buenos Aires, Argentina}
\email{dgalicer@dm.uba.ar, slassall@dm.uba.ar, paturco@dm.uba.ar}
\keywords{Tensor norms, $p$-compact operators, quasi $p$-nuclear operators, absolutely $p$-summing operators, approximation properties}
\subjclass[2000] {Primary: 47L20, 46A32, Secondary:  47B07,  47B10}
\begin{document}

\begin{abstract} We study the space of $p$-compact operators $\mathcal K_p$,  using the theory of tensor norms and operator ideals. We prove that  $\mathcal K_p$ is associated to $/d_p$, the left injective associate of the Chevet-Saphar tensor norm $d_p$ (which is equal to $g_{p'}'$). This allows us to relate the theory of $p$-summing operators with that of $p$-compact operators. With the results known for the former class and appropriate hypothesis on $E$ and $F$ we prove that $\mathcal K_p(E;F)$ is equal to  $\mathcal K_q(E;F)$ for a wide range of values of $p$ and $q$, and show that our results are sharp. We also exhibit several structural properties of $\mathcal K_p$. For instance, we obtain that $\mathcal K_p$ is regular, surjective, totally accessible and characterize its maximal hull $\mathcal K_p^{max}$ as the dual ideal of the $p$-summing operators, $\Pi_p^{dual}$. Furthermore, we prove that  $\mathcal K_p$ coincides isometrically with $\mathcal {QN}_p^{dual}$, the dual ideal of the quasi $p$-nuclear operators.  
\end{abstract}
\maketitle
\parindent 10pt
\parskip .2cm

\section*{Introduction}
In 1956, Grothendieck published his famous  Resume \cite{Gro}  in which he set out a complete theory of tensor products of Banach spaces. In the years following, the parallel theory of operator ideals was initiated by Pietsch \cite{PIE}. Researchers in the field have generally preferred the language of operator ideals to the more abstruse language of tensor products, and so the former theory has received more attention in the succeeding decades. However, the monograph of Defant and Floret \cite{DF}, in which the two fields are described in tandem, has initiated a period in which authors use indistinctly both languages.

In the recent years, Sinha and Karn  \cite{SiKa} introduced the notion of (relatively) $p$-compact sets. The definition is inspired in Grothendieck's result which characterize relatively compact sets as those
contained in the convex hull of a norm null sequence of vectors of the space. In a similar form, $p$-compact sets are determined by norm $p$-summable sequences. Related to this concept, the ideal of  $p$-compact operators $\mathcal K_p$,  and different approximation properties naturally appear (see definitions
below). Since relatively $p$-compact  sets are, in particular, relatively compact, $p$-compact operators are compact. These concepts were first studied  in \cite{SiKa} and  thereafter in  several other articles, see for instance \cite{AMR, CK, DOPS, DPS_dens, DPS_adj, SiKa2008}. However, we believe that the goodness of the space of $p$-compact operators inherits from the general theory of operator ideals and tensor products has not yet been fully exploited.

The main purpose of this article is to show that the principal properties of  the class of $p$-compact operators  can be easily obtained if we study this operator ideal with the theory of tensor products and tensor norms. This insight allows us to give new results, to recover many already known facts, and also to improve some of them.  

The paper is organized as follows. In Section~\ref{sec:notation} we fix some notation and list the classical operator ideals, with their associated  tensor norms, which we use thereafter.  Section~\ref{sec:general-results} is devoted to general results on $p$-compact sets and $p$-compact operators.  We  define a measure $\m_p$, to study the size of a $p$-compact set $K$ in a Banach space $E$ and show that  this measure is independent of whether $K$ is considered as a subset of $E$ or as a subset of $E''$, the bidual of $E$. This allows us to show that  $\mathcal K_p$ is regular.  In addition, we prove that  $\mathcal K_p$ coincides isometrically with $\mathcal {QN}_p^{dual}$, the dual ideal of the quasi $p$-nuclear operators.  
We also show that any $p$-compact operator factors via a $p$-compact operator and two other compact operators. 

In Section~\ref{sec:tensor_norms} we use the Chevet-Saphar tensor norm $d_p$  to find the appropriate tensor norm associated to the ideal of $p$-compact operators.  We show that $\mathcal K_p$ is associated to the left injective  associate of  $d_p$, denoted by $/d_p$, which is equal to $g_{p'}'$.  We use this to link the theory of $p$-summing operators with that of $p$-compact operators. With the results known for the former class and natural hypothesis on $E$ and $F$ we show that $\mathcal K_p(E;F)$ and  $\mathcal K_q(E;F)$ coincide for a wide range of values of $p$ and $q$.  We also use the limit orders of the ideals of $p$-summing operators to show that our results are sharp. Furthermore, we prove that  $\mathcal K_p$ is  surjective, totally accessible and characterize its maximal hull $\mathcal K_p^{max}$ as the dual ideal of the $p$-summing operators, $\Pi_p^{dual}$.

For the sake of completeness, we list as an Appendix the limit orders of the ideal $p$-compact operators obtained by a simple
transcription of those given in \cite{PIE} for $p$-summing operators. 
\vskip .3cm 

 When the final version of this manuscript was being written, we got to know  a preprint  on the same subject authored by Albrecht Pietsch \cite{Pie2}. The main results in both articles coincide. However, the material in each paper was obtained independently. While A. Pietsch based his work using the classical theory of operator ideals following his monograph \cite{PIE}, we preferred the language of tensor products developed in the book by A. Defant and K. Floret \cite{DF}. 
 
\noindent {\bf Acknowledgement.} The authors would like to thank Daniel Carando for helpful comments and suggestions.

\section{Notation and Preliminaries}
\label{sec:notation}

Along this paper $E$ and $F$ denote Banach spaces, $E'$  and $B_E$ denote respectively the topological dual and the closed unit ball of $E$. A sequence  $(x_n)_{n}$ in $E$ is said to be $p$-summable if $\sum_{n=1}^{\infty} \|x_n\|^p<\infty$ and $(x_n)_{n}$ is said to be weakly $p$-summable if $\sum_{n=1}^{\infty} |x'(x_n)|^p < \infty$ for all $x' \in E'$. We denote, respectively,  $\ell_p(E)$  and $\ell_p^w(E)$ the spaces of
all $p$-summable and weakly $p$-summable sequences of $E$, $1\le p <\infty$. Both spaces are Banach spaces, the first one endowed with the norm $\|(x_n)_n\|_p=(\sum_{n=1}^{\infty} \|x_n\|^p)^{1/p}$ and the second if the norm $\|(x_n)_n\|_p^w = \d\sup_{x' \in B_{E'}}  {\{(\textstyle \sum_{n=1}^{\infty}} |x'(x_n)|^p)^{1/p}\}$ is considered. For $p=\infty$, we have the spaces $c_0(E)$ and $c_0^w(E)$ formed, respectively, by all null and weakly null sequences of $E$, endowed with the natural norms. The $p$-convex hull of a sequence $(x_n)_n$ in $\ell_p(E)$ is defined as $\pco\{x_n\}=\{\sum_{n=1}^{\infty} \alpha_n x_n \colon (\alpha_n)_n \in B_{\ell_{p'}}\}$ where $\frac1p+\frac1{p'}=1$ ($\ell_{p'}=c_0$ if $p=1$).

Following \cite{SiKa}, we say that a subset $K\subset E$ is relatively $p$-compact, $1\le p\le \infty$,  if there exists a sequence $(x_n)_n\subset \ell_p(E)$ so that $K\subset \pco\{x_n\}$.

The space of linear bounded operators from $E$ to $F$ is denoted by $\mathcal L(E;F)$ and its subspace of finite rank operators by $\mathcal F(E;F)$. Often the finite rank operator $x\mapsto \sum_{j=1}^{n} x'_j(x)y_j$ is associated with the element $\sum_{j=1}^n x_j'{\otimes} y_j$ in $E'\otimes F$.   In many cases, the completion of  $E'\otimes F$ with a reasonable tensor norms produces a subspace of  $\mathcal L(E;F)$. For instance  the injective tensor product $E'\hat\otimes_\eps F$ can be viewed as the approximable operators from $E$ to $F$. The Chevet-Saphar tensor norm $g_p$  defined on $E'\otimes F$ by $g_p(u)=\inf\{\|(x'_n)_n\|_p \|(y_n)_n\|^w_{p'} \colon u=\sum_{j=1}^n x'_j\otimes y_j\}$, gives the ideal of $p$-nuclear operators $\mathcal N_p(E;F)$, $1\le p \le \infty$. If we denote  $x'\underline{\otimes} y$ the 1-rank operator  $x\mapsto x'(x)y$, we have that
$$
\mathcal N_p(E;F)= \{T=\sum_{n=1}^{\infty} x'_n\underline{\otimes} y_n\colon (x'_n)_n\in \ell_{p}(E') \text{ and } (y_n)_n \in \ell^w_{p'}(F)\},
$$
is a Banach operator ideal endowed with the norm
$$
\v_p(T)=\inf\{\|(x'_n)_n\|_p \|(y_n)_n\|^w_{p'} \colon T=\sum_{n=1}^{\infty} x'_n\underline{\otimes} y_n\}.
$$
It is known that the space of $p$-nuclear operators is a quotient of  $E'\hat\otimes_{g_p} F$ and the equality $\mathcal N_p(E;F)=E'\hat\otimes_{g_p} F$ holds if either $E'$ or $F$ has the approximation property, see \cite[Chapter 6]{RYAN}. The definition of $g_p$ is not symmetric, its transpose $d_p=g_p^t$ is associated with  the operator ideal
$$
\mathcal N^p(E;F)=\{T=\sum_{n=1}^{\infty} x'_n\underline{\otimes} y_n\colon (x'_n)_n\in \ell^w_{p'}(E') \text{ and } (y_n)_n \in \ell_{p}(F)\},
$$
equipped with the norm
$$
\v^p(T)=\inf\{\|(x'_n)_n\|_{\ell^w_{p'}(E')} \|(y_n)_n\|_{\ell_p(F)} \colon T=\sum_{n=1}^{\infty} x'_n\underline{\otimes} y_n\}.
$$
Here, we have that $\mathcal N^p(E;F)=E'\hat\otimes_{d_p} F$ if either $E'$ or $F$ has the approximation property.  Also, note that when $p=1$, we obtain $\mathcal N_1=\mathcal N^1=\mathcal N$, the ideal of nuclear operators and $d_1=g_1=\pi$, the projective tensor norm.

In this paper, we are focused on the study of $p$-compact operators,  introduced  by Sinha and Karn \cite{SiKa} as those which map the closed unit ball into  $p$-compact sets. The space  of $p$-compact operators  is denoted by $\mathcal K_p(E;F)$, $1\le p\le \infty$ which is an operator Banach ideal endowed with the norm
$$
\kappa_p(T)=\inf\{\|(x_n)_n\|_p \colon T(B_E)\subset \pco\{x_n\}\}.
$$
We want to understand this operator ideal  in terms of a tensor product and a reasonable tensor norm. In order to do so we also make use of the ideal of the quasi $p$-nuclear operators introduced and studied by Persson and Pietsch \cite{PerPi}.   The space of quasi $p$-nuclear operators from $E$ to $F$ is denoted by $\mathcal {QN}_p(E;F)$. This ideal is associated by duality with the ideal of $p$-compact operators \cite{DPS_adj}. 

Recall that an operator $T$ is quasi $p$-nuclear if and only if there exists a sequence $(x'_n)_n\subset \ell_p(E')$, such that
$$
\|Tx\| \le \Big(\sum_n |x'_n(x)|^p\Big)^{\frac 1p},
$$
for all $x\in E$ and the quasi $p$-nuclear norm of $T$ is given by
$\v_p^Q(T)=\inf\{\|(x'_n)_n\|_p\},$ where the infimum is taken over all  the sequences $(x'_n)_n \in \ell_p(E')$ satisfying the inequality above. It is known that $\mathcal {QN}_p=\mathcal N_p^{inj}$, where $\mathcal N_p^{inj}$ denotes the injective hull of $\mathcal N_p$.

For  general background on tensor products and tensor norms we refer the reader to the monographs by Defant and Floret  \cite{DF} and by Ryan \cite{RYAN}. All the definitions and notation we use regarding tensor norms and operator ideals can be found in \cite{DF}. For further reading on operator ideals we refer the reader to Pietsch's book \cite{PIE}.

\section{On $p$-compact sets and $p$-compact operators}
\label{sec:general-results}

Given a relatively $p$-compact set $K$ in a Banach space $E$ there exists a sequence $(x_n)_n\subset E$ so that
$K\subset \pco\{x_n\}$. Such a sequence is not unique, then we may consider the following definition.

\begin{definition} Let $E$ be a Banach space, $K\subset E$ a $p$-compact set, $1\leq p \le \infty$. We define
$$
\m_p(K;E)=\inf \{\|(x_n)_n\|_{\ell_p(E)} \colon K\subset \pco \{x_n\}\}.
$$
If $K\subset E$ is not a $p$-compact set,  $\m_p(K;E)=\infty$. 
\end{definition}

We say that $\m_p(K;E)$ measures the size of $K$ as a $p$-compact set of $E$.

There are some properties which derive directly from the definition of $\m_p$. For instance, since $\pco \{x_n\}$ is absolutely convex, $ \m_p(K;E)=\m_p(\overline{\co} \{K\};E)$. Also, by H\"older's inequality, we have that $\|x\|\le \|(x_n)_n\|_{\ell_p(E)}$ and in consequence, $\|x\| \le \m_p(K)$, for all $x\in K$. Moreover,  as compact sets can be considered $p$-compact sets for $p=\infty$ we have that any $p$-compact set is $q$-compact  and  $\sup_{x \in K} \|x\|=\m_{\infty}(K;E)\leq \m_q(K;E)\leq\m_p(K;E)$, for $1\leq p\le q\leq \infty$.

Some other properties are less obvious. Suppose that $E$ is a subspace of another Banach space $F$.  It is clear that if $K\subset E$ is $p$-compact in $E$ then $K$ is $p$-compact in $F$ and
$\m_p(K;F)\le  \m_p(K;E)$. As we  see in Section~\ref{sec:tensor_norms}, the definition of $\m_p$ depends on the space $E$. In other words, $K$ may be $p$-compact in $F$ but not  in $E$.  We show this in Corollary~\ref{mp-cambia}.

For the particular case when $F=E''$, the bidual of $E$, Delgado, Pi\~neiro and Serrano  \cite[Corollary~3.6]{DPS_adj} show that  a set $K\subset E$ is $p$-compact if only if $K$ is $p$-compact in $E''$  with $\m_p(K;E'')\leq \m_p(K;E)$. We want to prove that, in fact, the equality $\m_p(K;E'')= \m_p(K;E)$ holds. In order to do so we propose to inspect various results concerning operators and their adjoint and show that the transpose operator is not only continuous but also an isometry.

Recall that when $E'$ has the approximation property, any operator $T \in \mathcal L(E;F)$ with nuclear adjoint $T'$, is nuclear and both nuclear norms coincide,  $\v(T)=\v(T')$, see for instance  \cite[Proposition 4.10]{RYAN}. The analogous result for $p$-nuclear operators is due to Reinov \cite[Theorem 1]{REI} and states that
when $E'$ has the approximation property and $T'$ belongs to $\mathcal N_p(F';E')$, then $T\in \mathcal N^p(E;F)$. However, the relationship between $\v^p(T)$ and $\v_p(T')$ is omitted. It is clear that whenever $T$ is in $\mathcal N^p(E;F)$  its adjoint is $p$-nuclear and, in that case, $ \v_p(T')\le \v^p(T)$. The Proposition~\ref{T'pnuc} below shows that the isometric result is also valid for $p$-nuclear operators.  Before showing this, we need the following result.

\begin{proposition}\label{reinov igualdad}
Let $E$ and $F$ be Banach spaces, $E'$ with the approximation property, and let $T\in \mathcal L(E;F)$. If $J_F   T \in \mathcal N^p(E;F'')$ then $T\in \mathcal N^p(E;F)$ and $\v^p(J_F  T)=\v^p(T)$.
\end{proposition}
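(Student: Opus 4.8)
The plan is to separate the qualitative assertion ($T\in\mathcal N^p(E;F)$) from the isometric one ($\v^p(T)=\v^p(J_F T)$), handling them by different tools. First note that one inequality is automatic: whenever $T\in\mathcal N^p(E;F)$, composing a representation with the isometry $J_F$ produces a representation of $J_F T$ in $F''$ with no increase of the norm, so $\v^p(J_F T)\le\v^p(T)$. Thus everything reduces to showing that $J_F T\in\mathcal N^p(E;F'')$ forces $T\in\mathcal N^p(E;F)$ together with $\v^p(T)\le\v^p(J_F T)$.

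For the qualitative part I would pass to adjoints. Since $J_F T\in\mathcal N^p(E;F'')$, its adjoint $(J_F T)'=T'(J_F)'$ lies in $\mathcal N_p(F''';E')$ with $\v_p((J_F T)')\le\v^p(J_F T)$, as recalled before the statement. Using the canonical identity $(J_F)'J_{F'}=\mathrm{id}_{F'}$ and composing on the right with the isometric embedding $J_{F'}\colon F'\to F'''$, one gets $T'=(J_F T)'J_{F'}$, whence $T'\in\mathcal N_p(F';E')$ with $\v_p(T')\le\v^p(J_F T)$ by the ideal property. Reinov's theorem \cite{REI}, applicable because $E'$ has the approximation property, now yields $T\in\mathcal N^p(E;F)$.

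It remains to prove $\v^p(T)\le\v^p(J_F T)$, and for this I would return to the tensor language. Because $E'$ has the approximation property, $\mathcal N^p(E;F)=E'\hat\otimes_{d_p}F$ and $\mathcal N^p(E;F'')=E'\hat\otimes_{d_p}F''$ isometrically, and under these identifications $J_F T$ is the image of $T$ under $\mathrm{id}_{E'}\otimes J_F$. It therefore suffices to show that $\mathrm{id}_{E'}\otimes J_F\colon E'\hat\otimes_{d_p}F\to E'\hat\otimes_{d_p}F''$ is an isometry. This does not follow merely from the mapping properties of $d_p$, since the embedding $F\hookrightarrow F''$ need not be isometric at the level of $d_p$-tensor products for a general right factor; one must exploit the special position of $F$ in $F''$. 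I would first treat a finite tensor $u=\sum_n x'_n\otimes y_n\in E'\otimes F$ (identifying $F$ with $J_F(F)$): given a representation $\sum_k a'_k\otimes\phi_k$ of $u$ in $E'\otimes F''$ whose cost $\|(a'_k)_k\|^w_{p'}\|(\phi_k)_k\|_p$ is within $\eps$ of $d_p(u;E',F'')$, enlarge the finite-dimensional subspace $M:=\mathrm{span}\{\phi_k,y_n\}\subset F''$ and apply the principle of local reflexivity to obtain $R\colon M\to F$ with $\|R\|\le1+\eps$ acting as the identity on $M\cap F$. Then $\mathrm{id}_{E'}\otimes R$ carries $\sum_k a'_k\otimes\phi_k$ to a representation of $u$ itself in $E'\otimes F$ (because $R$ fixes the $y_n$), so that
$$
d_p(u;E',F)\le\|(a'_k)_k\|^w_{p'}\,\|R\|\,\|(\phi_k)_k\|_p\le(1+\eps)\big(d_p(u;E',F'')+\eps\big).
$$
Letting $\eps\to0$ gives $d_p(u;E',F)\le d_p(u;E',F'')$, and the reverse inequality is trivial. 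This finite-tensor equality extends to all of $E'\hat\otimes_{d_p}F$ by density, since $T\in\mathcal N^p(E;F)$ is a $d_p$-limit of finite tensors and $\mathrm{id}_{E'}\otimes J_F$ is norm-one; evaluating at $T$ then yields $\v^p(T)=\v^p(J_F T)$.

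The main obstacle is precisely the isometry of $\mathrm{id}_{E'}\otimes J_F$: although $J_F T$ has range inside $J_F(F)$, an economical $d_p$-representation of $J_F T$ may use vectors $\phi_k\in F''\setminus F$, so one cannot simply project them back into $F$. The delicate point is to invoke local reflexivity so as to simultaneously keep the representation cost essentially unchanged and fix the finite-dimensional trace of a genuine $F$-representation of $u$, ensuring that the transported tensor is exactly $T$ rather than merely a weak-$*$ approximation of it.
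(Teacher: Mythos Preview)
Your proof is correct and follows essentially the same line as the paper's: identify $\mathcal N^p(E;F)$ and $\mathcal N^p(E;F'')$ with $E'\hat\otimes_{d_p}F$ and $E'\hat\otimes_{d_p}F''$ (using that $E'$ has the approximation property) and then argue that $\mathrm{id}_{E'}\otimes J_F$ is an isometric embedding. The only difference is packaging: the paper simply invokes the Embedding Lemma \cite[13.3]{DF}, which states that $X\hat\otimes_\alpha Y\hookrightarrow X\hat\otimes_\alpha Y''$ is isometric for \emph{every} tensor norm $\alpha$, whereas you reprove this special case by hand via the principle of local reflexivity---which is exactly how the Embedding Lemma itself is established. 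Your detour through adjoints for the qualitative part is also fine but unnecessary, since Reinov's theorem as cited already gives $T\in\mathcal N^p(E;F)$ directly from $J_FT\in\mathcal N^p(E;F'')$.
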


\begin{proof} We only need to show the equality of the norms, the first part of the assertion corresponds with the first statement of  \cite[Theorem~1]{REI}. Note that since $E'$ has the approximation property, then $\mathcal N^p(E;F)=E'\hat{\otimes}_{d_p} F$ and $\mathcal N^p(E;F'')=E'\hat{\otimes}_{d_p} F''$.By the embedding lemma $E'\hat{\otimes}_{d_p}F$ is a subspace of $E'\hat{\otimes}_{d_p} F''$ via $Id_{E'} \otimes J_F$. Therefore, 
$$
\nu^p(J_F  T)=\nu^p(T),
$$
and the proof is complete. 
\end{proof}

\begin{proposition}\label{T'pnuc}
Let $E$ be a Banach space such that $E'$ has the approximation property and let $1\leq p <\infty$. If $T\in \mathcal L(E;F)$  has $p$-nuclear adjoint, then $T \in \mathcal N^p(E;F)$ and $\v^p(T)=\v_p(T')$.
\end{proposition}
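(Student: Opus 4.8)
The plan is to obtain the isometric equality by establishing the single inequality $\v^p(T)\le \v_p(T')$; the reverse inequality $\v_p(T')\le \v^p(T)$ has already been observed in the paragraph preceding the statement, and the membership $T\in\mathcal N^p(E;F)$ will be recovered along the way from Proposition~\ref{reinov igualdad} rather than invoked separately from Reinov's theorem.

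To produce the inequality I would start from a $p$-nuclear representation of the adjoint. Since $T'\in\mathcal N_p(F';E')$, write $T'=\sum_n \phi_n\underline{\otimes}\psi_n$ with $(\phi_n)_n\in\ell_p(F'')$ and $(\psi_n)_n\in\ell^w_{p'}(E')$, chosen so that the product of the norms of these sequences is as close as we like to $\v_p(T')$. The key point is that transposing this representation describes $J_F T$ as an operator from $E$ into $F''$: a direct check on rank-one tensors, using $\langle J_F Tx,f'\rangle = \langle T'f',x\rangle$ for $x\in E$ and $f'\in F'$, yields $J_F T=\sum_n \psi_n\underline{\otimes}\phi_n$. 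Because $(\psi_n)_n\in\ell^w_{p'}(E')$ and $(\phi_n)_n\in\ell_p(F'')$, this is precisely a representation witnessing $J_F T\in\mathcal N^p(E;F'')$, so that $\v^p(J_F T)\le \|(\psi_n)_n\|_{\ell^w_{p'}(E')}\,\|(\phi_n)_n\|_{\ell_p(F'')}$. Passing to the infimum over all $p$-nuclear representations of $T'$ gives $\v^p(J_F T)\le \v_p(T')$.

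It then remains to transfer this estimate from $F''$ back to $F$. Here I would apply Proposition~\ref{reinov igualdad}: since $E'$ has the approximation property and $J_F T\in\mathcal N^p(E;F'')$, that proposition gives both $T\in\mathcal N^p(E;F)$ and $\v^p(T)=\v^p(J_F T)$, whence $\v^p(T)\le \v_p(T')$. Combining this with the reverse inequality completes the proof.

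The one genuinely delicate step is the identity $J_F T=\sum_n \psi_n\underline{\otimes}\phi_n$, together with the verification that the transposed sequences land in the correct spaces $\ell^w_{p'}(E')$ and $\ell_p(F'')$, so that the series actually converges in the $\v^p$-norm and represents $J_F T$. Once this bookkeeping with adjoints is in place, everything else reduces to the infimum passage and to Proposition~\ref{reinov igualdad}, which is exactly the device that pulls the conclusion down from the bidual and is where the approximation property of $E'$ is used.
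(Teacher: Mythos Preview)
Your proposal is correct and follows essentially the same route as the paper: transpose a $p$-nuclear representation of $T'$ to obtain $J_F T=\sum_n \psi_n\underline{\otimes}\phi_n\in\mathcal N^p(E;F'')$ with $\v^p(J_F T)\le \v_p(T')$, then invoke Proposition~\ref{reinov igualdad} to pull this back to $\v^p(T)=\v^p(J_F T)$. The only cosmetic difference is that the paper justifies the identity for $J_F T$ via $J_F T=T'' J_E$ rather than by a direct pairing computation.
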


\begin{proof} The first part of the assertion is a direct consequence of \cite[Theorem 1]{REI}. We only give  the proof which shows the isometric result. Take $T$ as in the statement. Since $T'$ belongs to $\mathcal N_p(F';E')$, there exist sequences $(y''_n)_n \in \ell_p(F'')$ and $(x'_n)_n \in \ell^w_{p'}(E')$ such that $T'=\sum_{n=1}^{\infty} y''_n \underline{\otimes}x'_n$. Then, $J_F  T= T''  J_E=\sum_{n=1}^{\infty} x'_n\underline{\otimes}y''_n$, which implies that $J_F  T\in \mathcal N^p(E;F'')$. It is clear that $\v_p(T')\geq \v^p(J_F  T)$.
By Proposition~\ref{reinov igualdad} we have that $T\in \mathcal N^p(E;F)$ and 
$\v^p(J_F  T)=\v^p(T)$. The reverse inequality always holds, whence the result follows.
\end{proof}

Now we are ready to prove that the $\m_p$-measure of a $p$-compact set $K\subset E$ does not change if $K$ is considered as a subset of $E''$. 

\begin{theorem}\label{K en E''}
Let $E$ be a Banach space and $K\subset E$. Then $K$ is $p$-compact in $E$ if and only if $K$ is $p$-compact in $E''$ and $\m_p(K;E)=\m_p(K;E'')$.
\end{theorem}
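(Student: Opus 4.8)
The plan is to recast the measure $\m_p$ in the language of operators out of a fixed ``universal'' domain, so that Propositions~\ref{reinov igualdad} and~\ref{T'pnuc} become applicable. I would write $X_p=\ell_{p'}$ for $1<p<\infty$ and $X_1=c_0$, so that in every case the dual $X_p'$ (namely $\ell_p$ or $\ell_1$) has the approximation property; this is exactly what licenses the two preceding propositions. The first step is to prove the identity
\[
\m_p(K;E)=\inf\{\v^p(T)\colon T\in\mathcal L(X_p;E),\ K\subseteq \overline{T(B_{X_p})}\},
\]
together with its analogue for $E''$. The inequality ``$\le$'' is immediate: to an admissible $(x_n)_n\in\ell_p(E)$ with $K\subseteq\pco\{x_n\}$ one attaches $T\colon X_p\to E$, $Te_n=x_n$, which lies in $\mathcal N^p(X_p;E)$ with $\v^p(T)\le\|(x_n)_n\|_{\ell_p(E)}$ (the canonical basis being weakly $p'$-summable of norm one in $X_p'$) and $T(B_{X_p})=\pco\{x_n\}$; conversely, factoring an $\mathcal N^p$-operator $T=\sum_m x_m'\otimes y_m$ through $X_p$ shows $T(B_{X_p})$ is $p$-compact with $\m_p(T(B_{X_p});E)\le\v^p(T)$, giving ``$\ge$''. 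I would also record, via the bipolar theorem, the dual reading of the constraint: $K\subseteq\overline{T(B_{X_p})}$ if and only if $h_K(x'):=\sup_{x\in K}|\langle x,x'\rangle|\le\|T'x'\|_{\ell_p}$ for every $x'\in E'$.

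The easy half of the theorem, $\m_p(K;E'')\le\m_p(K;E)$, then follows at once (it is already in \cite[Corollary~3.6]{DPS_adj}): composing an admissible $T\colon X_p\to E$ with $J_E$ gives $J_ET\colon X_p\to E''$ with $K\subseteq\overline{(J_ET)(B_{X_p})}$, and Proposition~\ref{reinov igualdad} yields $\v^p(J_ET)=\v^p(T)$. For the reverse inequality I would start from a near-optimal $v\colon X_p\to E''$ with $K\subseteq\overline{v(B_{X_p})}$ and $\v^p(v)\le\m_p(K;E'')+\eps$. By Proposition~\ref{T'pnuc} (applied with domain $X_p$, whose dual has the approximation property) the transpose $v'\colon E'''\to\ell_p$ is $p$-nuclear with $\v_p(v')=\v^p(v)$; restricting along $J_{E'}\colon E'\to E'''$ produces $w:=v'J_{E'}\in\mathcal N_p(E';\ell_p)$ with $\v_p(w)\le\v^p(v)$, and the dual reading above, restricted to $E'\subseteq E'''$, gives $h_K(x')\le\|wx'\|_{\ell_p}$ for all $x'\in E'$. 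The aim is to manufacture from $w$ an operator $T\colon X_p\to E$ with the same support-function bound and $\v^p(T)\le\v_p(w)$; feeding $T$ back into the operator characterization then yields $\m_p(K;E)\le\v^p(T)\le\m_p(K;E'')+\eps$, and letting $\eps\to0$ closes the argument.

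The hard part will be precisely this last descent, and it is where the approximation property does the essential work. The operator $w$ is $p$-nuclear but need not be weak$^*$-continuous --- its natural representation uses vectors of $E''$ rather than of $E$ --- so it is not a priori the transpose of any operator $X_p\to E$, and one cannot simply invoke Proposition~\ref{reinov igualdad}, which presupposes an operator that genuinely factors through $J_E$. The feature to exploit is that the constraint involves only the pairing with $E'$, hence is insensitive to whether the relevant vectors are read in $E$ or in $E''$; combined with the isometric embedding $\ell_p\hat\otimes_{d_p}E\hookrightarrow\ell_p\hat\otimes_{d_p}E''$ supplied by the embedding lemma (the tensor counterpart of Reinov's theorem underlying Proposition~\ref{reinov igualdad}), this should force the optimal value over the subspace $\mathcal N^p(X_p;E)$ to coincide with that over $\mathcal N^p(X_p;E'')$. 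Concretely I expect to obtain $T$ by recognizing the transpose $w'\colon X_p\to E''$, thanks to the $E'$-compatibility of the constraint, as lying $\v^p$-isometrically in the image of that embedding, so that $\v^p(T)=\v_p(w)$. Finally, the endpoint $p=\infty$, where $p$-compactness is ordinary compactness, is classical and can be handled separately, while for $p=1$ one uses $X_1=c_0$ and $\mathcal N^1=\mathcal N$; the only change is that $X_1$ is no longer reflexive, which is harmless since $X_1'=\ell_1$ still has the approximation property.
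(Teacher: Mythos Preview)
Your operator reformulation of $\m_p$ through $X_p=\ell_{p'}$ is correct, and the easy inequality $\m_p(K;E'')\le\m_p(K;E)$ follows exactly as you say. The gap is precisely where you locate it --- the descent --- and your proposed fix does not close it. The embedding lemma gives an isometric inclusion $\mathcal N^p(X_p;E)\hookrightarrow\mathcal N^p(X_p;E'')$, but that only tells you that \emph{when} an operator factors through $J_E$ its $\v^p$-norm is unchanged; it says nothing about which elements of $\mathcal N^p(X_p;E'')$ lie in the image. Now unwind your construction: with $X_p$ reflexive one computes $w'=J_{E'}' \, v''=v$ (via $J_{E'}'J_{E''}=\mathrm{id}_{E''}$), so ``recognizing $w'$ as lying in the image of the embedding'' is exactly the assertion that your near-optimal $v$ already factors through $E$ --- which is the statement of the theorem. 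The $E'$-compatibility of the support-function constraint does not help here: any $v\colon X_p\to E''$ whose image contains $K$ satisfies $h_K\le\|v'J_{E'}(\cdot)\|$, regardless of whether it takes values in $E$.

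The paper's proof supplies the missing idea by changing the universal domain. Instead of $X_p$, it uses the canonical map $\Psi\colon\ell_1(K)\to E$, $\Psi((\alpha_x)_x)=\sum_x\alpha_x x$, for which $\m_p(K;E)=\kappa_p(\Psi)$ and $\m_p(K;E'')=\kappa_p(J_E\Psi)$. From $\kappa_p(J_E\Psi)$ one gets $\v_p^Q(\Psi'J_E')\le\kappa_p(J_E\Psi)$ (adjoint of a $p$-compact operator is quasi $p$-nuclear), hence $\v_p^Q(\Psi')\le\v_p^Q(\Psi'J_E')$. The crucial point is that $\Psi'$ lands in $\ell_\infty(K)$, an \emph{injective} Banach space, so $\mathcal{QN}_p=\mathcal N_p$ there with equal norms; Proposition~\ref{T'pnuc} then returns $\v^p(\Psi)=\v_p(\Psi')$, and since $\Psi$ was defined from the start as a map into $E$, no descent is needed. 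If you want to rescue your route, you are forced back to this step: from $h_K(x')\le\|wx'\|$ one only gets $\|\Psi'x'\|\le\|wx'\|$, hence $\v_p^Q(\Psi')\le\v_p^Q(w)\le\v_p(w)$, and the injectivity of $\ell_\infty(K)$ is again what converts this into a genuine $\mathcal N_p$-bound on $\Psi'$.
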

\begin{proof} We only need to show the inequality $\m_p(K;E)\leq \m_p(K;E'')$ since the claim  $K$ is $p$-compact in $E$ if and only if $K$ is $p$-compact in $E''$ is proved in \cite[Corollary~3.6]{DPS_adj}. Also, in this case,  the inequality  $\m_p(K;E'')\leq \m_p(K;E)$ is obvious.

Suppose that $K\subset E$ is $p$-compact and define the operator $\Psi\colon \ell_1(K)\rightarrow E$ such that for $\alpha=(\alpha_x)_{x\in K}$,
$$
\Psi(\alpha)=\sum_{x \in K}\alpha_x x.
$$
Note that $K\subset \Psi(B_{\ell_1(K)})\subset \overline{\co}(K)$ thus, $\Psi$ and $J_E  \Psi$ are $p$-compact operators. Also,  $\m_p(K;E)=\kappa_p(\Psi)$ and $\m_p(K;E'')=\kappa_p(J_E \Psi)$.
By \cite[Proposition ~3.1]{DPS_adj}, $\Psi' J_E'$ belongs to $\mathcal{QN}_p(E''';\ell_{\infty}(K))$ and $\v_p^Q(\Psi'  J_E')\le \kappa_p(J_E  \Psi)$. Therefore $\Psi'$ belongs to $\mathcal{QN}_p(E';\ell_{\infty}(K))$ and $\v_p^Q(\Psi')\le \v_p^Q(\Psi'  J_E')$.

Since $\ell_{\infty}(K)$ is injective, $\Psi ' \in \mathcal N_p(E';\ell_{\infty}(K))$ and $\v_p(\Psi')= \v_p^Q(\Psi')$, see \cite[Satz 38]{PerPi}. Now, an application of Proposition~\ref{T'pnuc}, gives us that $\Psi$ is an operator in $\mathcal N^p(\ell_1(K);E)$ and $\v^p(\Psi)=\v_p(\Psi')$. In particular, $\Psi \in \K_p(\ell_1(K);E)$ and $\kappa_p(\Psi)\leq \v^p(\Psi)$.

Thus, we have
$$
\m_p(K;E)=\kappa_p(\Psi)\leq \v^p(\Psi)=\v_p(\Psi')=\v_p^Q(\Psi')\le \v_p^Q(\Psi'  J_E')\le \kappa_p(J_E  \Psi)=\m_p(K;E''),
$$
and the proof is complete.
\end{proof}

\begin{theorem}
The operator ideal $\mathcal K_p$ is regular.
 \end{theorem}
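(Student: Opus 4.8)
The plan is to reduce the statement entirely to Theorem~\ref{K en E''}. Recall that a Banach operator ideal $\mathcal A$ is regular precisely when, for every $T \in \mathcal L(E;F)$, the membership $J_F  T \in \mathcal A(E;F'')$ forces $T \in \mathcal A(E;F)$ together with the equality of the ideal norms; the converse implication is automatic from the ideal property, since $J_F$ is a metric injection and hence $\kappa_p(J_F  T)\le \kappa_p(T)$ whenever $T\in\mathcal K_p(E;F)$. Accordingly, I would fix $T \in \mathcal L(E;F)$ and work throughout with the set $K := T(B_E)\subset F$.

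The first step is to observe that the $p$-compact norm is nothing but the $\m_p$-measure of the image of the unit ball. Directly from the definitions, $\kappa_p(T)=\m_p(T(B_E);F)$, and since $(J_F  T)(B_E)=J_F\big(T(B_E)\big)$ we also have $\kappa_p(J_F  T)=\m_p\big(J_F(T(B_E));F''\big)$. In particular, $T\in\mathcal K_p(E;F)$ if and only if $K$ is $p$-compact in $F$, while $J_F  T\in\mathcal K_p(E;F'')$ if and only if $K$, regarded as a subset of $F''$ via the canonical embedding, is $p$-compact in $F''$.

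The second step is to apply Theorem~\ref{K en E''} to the Banach space $F$ and the set $K=T(B_E)$. That theorem asserts at once that $K$ is $p$-compact in $F$ if and only if $K$ is $p$-compact in $F''$, and that $\m_p(K;F)=\m_p(K;F'')$. Feeding the two identifications from the previous step into this equivalence and this isometric equality yields simultaneously the equivalence $T\in\mathcal K_p(E;F)\Leftrightarrow J_F  T\in\mathcal K_p(E;F'')$ and the norm identity $\kappa_p(T)=\kappa_p(J_F  T)$, which is exactly the assertion of regularity.

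Since all the genuine analytic content has already been absorbed into Theorem~\ref{K en E''} (whose proof rests on Proposition~\ref{T'pnuc} and on the isometric behaviour of transposes), no substantial obstacle remains. The only point that requires care is the bookkeeping that the operator-theoretic notion of regularity coincides with the set-theoretic statement about $\m_p$, namely correctly matching $(J_F  T)(B_E)$ with the image $J_F\big(T(B_E)\big)$ of $K$ under the canonical embedding, so that the passage from $F$ to $F''$ at the level of sets corresponds to the composition with $J_F$ at the level of operators.
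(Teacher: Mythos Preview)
Your proof is correct and follows essentially the same approach as the paper: both reduce regularity directly to Theorem~\ref{K en E''} by identifying $\kappa_p(T)$ with $\m_p(T(B_E);F)$ and $\kappa_p(J_F T)$ with $\m_p(T(B_E);F'')$. Your version is simply more explicit about the bookkeeping and the isometric part, which the paper leaves implicit.
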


\begin{proof} Let $E$ and $F$ be Banach spaces and $T\colon E\to F$ be an operator such that $J_FT$ is $p$-compact. Therefore, by the theorem above, $\m_p(J_FT(B_E);F'')= \m_p(T(B_E);F)$ and $T$ is $p$-compact. Whence, the result follows.
\end{proof}

Also we obtain the isometric version of \cite[Corollary~3.6]{DPS_adj} which is stated as follows.

\begin{corollary}\label{T'' p-compact} Let $E$ and $F$ be Banach spaces. Then $T\in \K_p(E;F)$ if and only if $T''\in \K_p(E'';F'')$ and $\kappa_p(T)=\kappa_p(T'')$.
\end{corollary}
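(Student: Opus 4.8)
The plan is to reduce the whole statement to the measure $\m_p$ and to Theorem~\ref{K en E''}, exploiting the intertwining identity $J_F T = T'' J_E$. Directly from the definitions of $\kappa_p$ and $\m_p$ one has $\kappa_p(T)=\m_p(T(B_E);F)$ and $\kappa_p(T'')=\m_p(T''(B_{E''});F'')$. Since $J_F T = T'' J_E$, the image $J_F(T(B_E))$ coincides with $T''(J_E(B_E))$, so Theorem~\ref{K en E''} applied to the set $K=T(B_E)\subset F$ yields
$$
\kappa_p(T)=\m_p(T(B_E);F)=\m_p(J_F(T(B_E));F'')=\m_p(T''(J_E(B_E));F'').
$$
Thus the entire statement follows once I prove that enlarging $J_E(B_E)$ to the whole ball $B_{E''}$ does not change the measure, i.e. $\m_p(T''(J_E(B_E));F'')=\m_p(T''(B_{E''});F'')$.

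The inequality $\le$ is immediate from $J_E(B_E)\subset B_{E''}$ and the monotonicity of $\m_p$. For the reverse inequality I would invoke Goldstine's theorem together with the weak-star-to-weak-star continuity of the bitranspose $T''=(T')'$. By Goldstine $J_E(B_E)$ is weak-star dense in $B_{E''}$, and since $J_E(B_E)$ is convex its weak-star closed convex hull is exactly $B_{E''}$; applying $T''$ and using its weak-star continuity gives $T''(B_{E''})\subset \overline{\co}^{\,w^{*}}(T''(J_E(B_E)))$, while the reverse inclusion holds because $T''(B_{E''})$ is weak-star compact (Alaoglu) and convex. Hence $T''(B_{E''})=\overline{\co}^{\,w^{*}}(T''(J_E(B_E)))$.

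To pass from the weak-star closed to the norm-closed convex hull I would use that $T''(J_E(B_E))=J_F(T(B_E))$ is relatively norm-compact (as $p$-compact operators are compact, $T(B_E)$ is relatively compact): its norm-closed absolutely convex hull is then norm-compact, hence weak-star compact and weak-star closed, so the two closed convex hulls coincide and $T''(B_{E''})=\overline{\co}(T''(J_E(B_E)))$. Finally, the closure invariance of the measure recorded after the definition, $\m_p(K;F'')=\m_p(\overline{\co}\{K\};F'')$, applied to $K=T''(J_E(B_E))$, gives precisely $\m_p(T''(B_{E''});F'')=\m_p(T''(J_E(B_E));F'')$, and chaining this with the displayed equalities produces $\kappa_p(T)=\kappa_p(T'')$. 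The $p$-compactness equivalence is read off the same chain: if $T$ is $p$-compact the argument above both justifies the compactness used and forces $\kappa_p(T'')=\kappa_p(T)<\infty$; conversely, if $T''$ is $p$-compact then $T''(J_E(B_E))\subset T''(B_{E''})$ is $p$-compact in $F''$, so by Theorem~\ref{K en E''} the set $T(B_E)$ is $p$-compact in $F$ and $T\in\K_p(E;F)$, whereupon the first implication supplies the isometry.

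I expect the main obstacle to be exactly the passage from weak-star density to an honest statement about the covering $p$-convex hulls: a sequence $(z_n)\in\ell_p(F'')$ covering $T''(J_E(B_E))$ need not have $\pco\{z_n\}$ weak-star closed (this already fails for $p=1$, where $\pco\{z_n\}$ is not even norm-closed). The device that sidesteps this difficulty is to argue throughout at the level of closed convex hulls and to rely on the closure invariance of $\m_p$ rather than on a single covering sequence, so that the non-closedness of individual $p$-convex hulls never has to be confronted.
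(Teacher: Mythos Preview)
Your proof is correct and follows essentially the same route as the paper: both arguments use Goldstine's theorem, the weak-star continuity of $T''$, and the compactness of $T$ to sandwich $T''(B_{E''})$ between $J_F T(B_E)$ and its (weak-star $=$ norm) closure, then invoke Theorem~\ref{K en E''} and the closure-invariance of $\m_p$. The only cosmetic difference is that the paper works with the plain closure $\overline{J_F T(B_E)}$ and monotonicity, whereas you pass through the closed convex hull; since $\m_p(K)=\m_p(\overline{\co}\,K)$ already subsumes closure-invariance, the two versions are interchangeable.
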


\begin{proof} The statement $T\in \K_p(E;F)$ if and only if $T''\in \K_p(E'';F'')$ is part of \cite[Corollary~3.6]{DPS_adj}. Let $T$ be a $p$-compact operator. In particular, $T(B_E)$ is relatively compact and
$$
J_F  T(B_E)\subset T''(B_{E''})\subset \overline{J_F  T(B_E)}^{w^*} = \overline{J_F  T(B_E)}.
$$
Applying twice Theorem~\ref{K en E''} we get
$$
\m_p(T(B_E);F)=\m_p(T(B_E);F'')\leq \m_p(T''(B_{E''});F'')\leq \m_p(\overline{J_F  T(B_E)};F'')=\m_p(T(B_E);F).
$$
Since $\kappa_p(T)=\m_p(T(B_E);F)$ and $\kappa_p(T'')= \m_p(T''(B_{E''}); F'')$, the isometry  is proved.
\end{proof}

Now, we describe the duality between $p$-compact and quasi $p$-nuclear operators. On the one hand, we have that an operator  $T$   is quasi $p$-nuclear if and only if $T'$ is $p$-compact and $\kappa_p(T')= \v_p^Q(T)$ \cite[Corollary~3.4]{DPS_adj}. On the other hand, $T$ is  $p$-compact  if and only if its adjoint $T'$ is quasi $p$-nuclear, in this case $\v_p^Q(T')\le \kappa_p(T)$ \cite[Proposition~3.8]{DPS_adj}.  We improve this last result by showing that the transposition is an isometry.

\begin{corollary}\label{T Cuasi} Let $E$ and $F$ be Banach spaces. Then $T \in \mathcal K_p(E;F)$ if and only if $T'\in \mathcal {QN}_p(F';E')$ and $\kappa_p(T)=\v_p^Q(T')$.
\end{corollary}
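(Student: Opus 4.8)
The plan is to observe that the stated equivalence is essentially already available from the literature, so that the genuinely new content lies entirely in upgrading the known norm inequality to an isometric equality. Indeed, by \cite[Proposition~3.8]{DPS_adj}, if $T\in\mathcal K_p(E;F)$ then $T'\in\mathcal{QN}_p(F';E')$ with $\v_p^Q(T')\le\kappa_p(T)$; conversely, \cite[Corollary~3.4]{DPS_adj} guarantees that whenever $T'\in\mathcal{QN}_p(F';E')$ the double adjoint $(T')'=T''$ is $p$-compact, and then Corollary~\ref{T'' p-compact} lets us descend to conclude $T\in\mathcal K_p(E;F)$. Thus both implications of the equivalence come for free, and only the identity $\kappa_p(T)=\v_p^Q(T')$ remains to be proved.

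Since the inequality $\v_p^Q(T')\le\kappa_p(T)$ is already in hand, I would secure the reverse inequality by passing to the bidual. Assuming $T\in\mathcal K_p(E;F)$, the operator $T'$ is quasi $p$-nuclear, so I apply the isometric half of \cite[Corollary~3.4]{DPS_adj} to $T'$ itself: its adjoint $T''$ is $p$-compact and $\kappa_p(T'')=\v_p^Q(T')$. The remaining point is to descend from $T''$ back to $T$ with no loss of norm, which is exactly what the isometric bidual identity of Corollary~\ref{T'' p-compact} supplies, namely $\kappa_p(T)=\kappa_p(T'')$. Chaining these gives $\kappa_p(T)=\kappa_p(T'')=\v_p^Q(T')$, and together with the previously noted bound this yields the desired isometry as well as the equivalence.

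The essential ingredient, and the only place where new input is required, is Corollary~\ref{T'' p-compact}: the cited results of Delgado, Pi\~neiro and Serrano furnish only the continuous duality $\v_p^Q(T')\le\kappa_p(T)$, and it is the equality $\kappa_p(T)=\kappa_p(T'')$---which ultimately rests on Theorem~\ref{K en E''}---that converts the isometric duality available for $T'$ into an isometric duality for $T$. I do not anticipate any serious technical obstacle beyond keeping careful track of the adjoint relations $(T')'=T''$ and checking that the application of \cite[Corollary~3.4]{DPS_adj} to $T'$ in place of $T$ is legitimate.
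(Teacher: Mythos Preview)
Your proposal is correct and follows essentially the same route as the paper's own proof: combine the known inequality $\v_p^Q(T')\le\kappa_p(T)$ from \cite[Proposition~3.8]{DPS_adj} with the isometric identity $\kappa_p(T'')=\v_p^Q(T')$ from \cite[Corollary~3.4]{DPS_adj} applied to $T'$, and close the loop using the bidual isometry $\kappa_p(T)=\kappa_p(T'')$ of Corollary~\ref{T'' p-compact}. The paper states exactly this in two sentences, so your write-up is just a more explicit rendering of the same argument.
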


\begin{proof} The inequality $\v_p^Q(T')\le \kappa_p(T)$ and the equality $\kappa_p(T'')= \v_p^Q(T')$ always hold.  A direct application of Corollary~\ref{T'' p-compact} completes the proof.
\end{proof}

With Corollary~\ref{T Cuasi} and  the results mentioned above we can state the following identities.

\begin{theorem} \label{Kpdual=QNp} $\mathcal K_p^{dual}\overset 1 = \mathcal {QN}_p$\ and\ \  $\mathcal {QN}_p^{dual}\overset 1=\mathcal K_p$.
\end{theorem}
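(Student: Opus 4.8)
The plan is to deduce both isometric identities directly from the duality facts already assembled in this section, treating each as a statement about the dual ideal $\mathcal A^{dual}$ defined by $T \in \mathcal A^{dual}(E;F) \iff T' \in \mathcal A(F';E')$ with the norm carried over from the adjoint. First I would recall that for any operator ideal, membership in $\mathcal A^{dual}$ is precisely the condition on the transpose, so the content of each equality is an iff-plus-isometry statement about adjoints. The second identity, $\mathcal {QN}_p^{dual} \overset{1}{=} \mathcal K_p$, is immediate from Corollary~\ref{T Cuasi}: that corollary says exactly that $T \in \mathcal K_p(E;F)$ if and only if $T' \in \mathcal {QN}_p(F';E')$, with $\kappa_p(T) = \v_p^Q(T')$, which is the definition of the norm on $\mathcal {QN}_p^{dual}$. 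So no further work is needed there beyond unwinding notation.

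For the first identity, $\mathcal K_p^{dual} \overset{1}{=} \mathcal {QN}_p$, I would invoke the two results quoted just before Corollary~\ref{T Cuasi}. The forward direction and norm inequality come from \cite[Corollary~3.4]{DPS_adj}: $T \in \mathcal {QN}_p$ forces $T'$ to be $p$-compact with $\kappa_p(T') = \v_p^Q(T)$, so $T \in \mathcal {QN}_p$ implies $T \in \mathcal K_p^{dual}$ with matching norm. For the reverse, I would use the regularity of $\mathcal K_p$ together with Corollary~\ref{T'' p-compact}: if $T' \in \mathcal K_p(F';E')$, then applying Corollary~\ref{T Cuasi} to $T'$ gives $T'' = (T')' \in \mathcal {QN}_p(E'';F'')$ isometrically, and then one passes back to $T$ using that $\mathcal {QN}_p = \mathcal N_p^{inj}$ is regular (equivalently, via Theorem~\ref{K en E''} and the isometry between $T$ and $T''$ on the quasi-$p$-nuclear side). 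The point is that the equality $\kappa_p(T'') = \v_p^Q(T')$ from Corollary~\ref{T Cuasi}, combined with the already-established $\kappa_p(T') = \kappa_p(T''')$-type stability, pins down the norm.

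The main obstacle I anticipate is bookkeeping rather than mathematics: making sure the regularity of $\mathcal {QN}_p$ (or equivalently of $\mathcal K_p$, just proved) is used correctly so that the biadjoint information transfers back to $T$ itself without losing the isometry. Concretely, one must check that $\v_p^Q(T) = \v_p^Q(T'')$ for the quasi $p$-nuclear norm, so that the chain $\v_p^Q(T) = \v_p^Q(T'') = \kappa_p(T')$ closes up with the inequality from \cite[Corollary~3.4]{DPS_adj} going the other way. Once both inequalities $\kappa_p(T') \le \v_p^Q(T)$ and $\v_p^Q(T) \le \kappa_p(T')$ are in hand, the isometry follows and the first identity is established. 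Given how much of the duality has already been set up in Corollaries~\ref{T'' p-compact} and~\ref{T Cuasi}, I expect the whole argument to reduce to citing those corollaries and observing that the norms quoted there are exactly the dual-ideal norms, so the proof should be short.
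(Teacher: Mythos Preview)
Your proposal is correct and aligns with the paper's approach: the second identity follows immediately from Corollary~\ref{T Cuasi}, and the first identity from \cite[Corollary~3.4]{DPS_adj}. The one difference is that you treat the reverse inclusion of $\mathcal K_p^{dual}\overset 1 = \mathcal{QN}_p$ as requiring a separate argument via $T''$ and regularity of $\mathcal{QN}_p$, whereas the paper simply observes that \cite[Corollary~3.4]{DPS_adj}, as quoted just before Corollary~\ref{T Cuasi}, is already a full ``if and only if'' statement with equality of norms (\emph{$T$ is quasi $p$-nuclear if and only if $T'$ is $p$-compact, and $\kappa_p(T')=\v_p^Q(T)$}). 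So your biadjoint--regularity detour, while valid, is unnecessary: both identities are immediate once you read the two cited results as bidirectional isometric statements.
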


We finish this section with a factorization result of $p$-compact operators, which improves \cite[Theorem 3.2]{SiKa} and \cite[Theorem~3.1]{CK}. The characterization given below should be compared with \cite[Proposition 5.23]{djt}. 

\begin{proposition}\label{factor}  Let $E$ and $F$ be Banach spaces. Then an operator  $T \in \mathcal L(E;F)$ is $p$-compact if and only if $T$ admits a factorization via a $p$-compact operator $T_0$ and a two compact operators $R$ and $S$ such that $T=ST_0R$.

Moreover, $\kappa_p(T)=\inf\{\|S\|\kappa_p(T_0)\|R\|\}$ where the infimum is taken over all the factorizations as above.
\end{proposition}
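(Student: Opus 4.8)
We argue both implications, keeping track of the norms along the way. The \emph{if} part is immediate from the ideal property of $\mathcal K_p$: if $T=ST_0R$ with $T_0$ $p$-compact and $R,S$ bounded, then $T\in\mathcal K_p(E;F)$ and $\kappa_p(T)=\kappa_p(ST_0R)\le\|S\|\,\kappa_p(T_0)\,\|R\|$. Taking the infimum over all such factorizations gives $\kappa_p(T)\le\inf\{\|S\|\kappa_p(T_0)\|R\|\}$, so the content of the statement is the reverse inequality \emph{together with} the existence of factorizations whose outer factors are compact. Thus the plan is: given $p$-compact $T$ and $\eps>0$, to produce compact $R,S$ and a $p$-compact $T_0$ with $T=ST_0R$ and $\|S\|\kappa_p(T_0)\|R\|\le(1+\eps)\kappa_p(T)$.

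First I would fix $(y_n)_n\in\ell_p(F)$ with $T(B_E)\subset\pco\{y_n\}$ and $\|(y_n)_n\|_p\le\kappa_p(T)+\eps$. A standard tail-estimate lemma provides scalars $\mu_n\ge1$ with $\mu_n\uparrow\infty$, $(z_n)_n:=(\mu_n y_n)_n\in\ell_p(F)$ and $\|(z_n)_n\|_p\le(1+\eps)\|(y_n)_n\|_p$ (note that then $\|z_n\|\to0$). Consider the canonical $p$-compact operator $\Phi\colon\ell_{p'}\to F$, $\Phi(\xi)=\sum_n\xi_n z_n$, so $\kappa_p(\Phi)\le\|(z_n)_n\|_p$, let $q\colon\ell_{p'}\to\ell_{p'}/\ker\Phi$ be the quotient map and $\widetilde\Phi$ the induced injection, still $p$-compact with $\kappa_p(\widetilde\Phi)\le\|(z_n)_n\|_p$. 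Writing $D=\mathrm{diag}(\mu_n^{-1})\colon\ell_{p'}\to\ell_{p'}$, which is compact with $\|D\|\le1$, one has $\Phi\circ D(\xi)=\sum_n\xi_n y_n$, whence $\pco\{y_n\}=\widetilde\Phi\big(q(D\,B_{\ell_{p'}})\big)$. Since $\widetilde\Phi$ is injective and $Tx\in\pco\{y_n\}$ for $x\in B_E$, the assignment $Rx:=\widetilde\Phi^{-1}(Tx)$ defines a linear operator $R\colon E\to\ell_{p'}/\ker\Phi$ with $T=\widetilde\Phi\,R$, $\|R\|\le1$, and $R(B_E)\subset q(D\,B_{\ell_{p'}})$. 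As $D$ is compact, $q(D\,B_{\ell_{p'}})$ is relatively compact, so $R$ is a compact operator. This produces the (domain-side) right factor.

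It remains to split the canonical $p$-compact operator as $\widetilde\Phi=S\,T_0$ with $S$ compact, $T_0$ $p$-compact, and then set $T=S\,T_0\,R$. Here I expect the main difficulty. The construction of $R$ exploited that a $p$-compact operator is described by range-side data $(z_n)$, which makes it easy to peel a compact factor on the \emph{domain} side; peeling a compact factor on the \emph{range} side (the factor $S$, mapping into $F$) while retaining a $p$-compact core is the genuinely asymmetric step, and a direct summation/diagonal model through $\ell_1$ or $\ell_{p'}$ fails, since the $\ell_p$-summability required of the diagonal to keep $T_0$ $p$-compact clashes with the compactness of the outer map when $\|z_n\|$ decays slowly. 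The way I would overcome this is by duality: by Corollary~\ref{T Cuasi} the adjoint $\widetilde\Phi'$ is quasi $p$-nuclear with $\v_p^Q(\widetilde\Phi')=\kappa_p(\widetilde\Phi)$, and since $\mathcal{QN}_p=\mathcal N_p^{inj}$, its composition with the canonical metric injection of the range into a space $\ell_\infty(\Gamma)$ is $p$-nuclear, with equal norm.

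Now the essential gain is that a $p$-nuclear operator factors through a diagonal multiplication $\mathrm{diag}(\lambda_n)$ with $(\lambda_n)_n\in\ell_p$, and this diagonal model is \emph{symmetric}: writing $\lambda_n=\alpha_n\beta_n$ with $\alpha_n,\beta_n\to0$ distributes a compact factor to either side at will, so the compact part can be placed on the side the primal picture refused. Performing this split on the side of $\widetilde\Phi'$ and dualizing back — the passage between an operator and its bidual being controlled by the regularity of $\mathcal K_p$ and Corollary~\ref{T'' p-compact} — yields $\widetilde\Phi=S\,T_0$ with $S$ compact, $\|S\|\le1$, and $T_0$ $p$-compact with $\kappa_p(T_0)\le(1+\eps)\|(z_n)_n\|_p$. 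Combining with the first step gives $T=S\,T_0\,R$ with $R,S$ compact, $T_0$ $p$-compact, and $\|S\|\,\kappa_p(T_0)\,\|R\|\le(1+\eps)\|(z_n)_n\|_p\le(1+\eps)^2(\kappa_p(T)+\eps)$. Letting $\eps\to0$ yields $\inf\{\|S\|\kappa_p(T_0)\|R\|\}\le\kappa_p(T)$, which together with the \emph{if} part finishes both the factorization and the isometric identity.
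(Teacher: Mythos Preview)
Your construction of the domain-side compact factor $R$ is correct and coincides with the paper's approach (the paper obtains the same factorization $T=\tilde\theta_z R$ by citing \cite[Theorem~3.2]{SiKa}, with the compactness of $R$ coming from exactly the diagonal trick you describe). The difficulty is in the second half.

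First, your assertion that ``a direct summation/diagonal model through $\ell_1$ \dots\ fails'' is not correct. Given $(z_n)\in\ell_p(F)$, the \emph{same} tail lemma you invoked for the first step lets you write $z_n=\gamma_n w_n$ with $(\gamma_n)\in\ell_p$, $\|(\gamma_n)\|_p\le(1+\eps)\|(z_n)\|_p$, and $\{w_n\}$ relatively compact in $F$ (take $\gamma_n=\|z_n\|$ on a long initial block and inflate the tail). Then $S\colon\ell_1\to F$, $Se_n=w_n$, is compact with $\|S\|\le 1$; passing to the injective quotient $\bar S\colon\ell_1/M\to F$ and setting $T_0=\bar S^{-1}\widetilde\Phi$ gives $\widetilde\Phi=\bar S\,T_0$ with $T_0(B_X)\subset q\big(\pco\{\gamma_ne_n\}\big)$, hence $\kappa_p(T_0)\le\|(\gamma_n)\|_p$. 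This is exactly the content of \cite[Theorem~3.1]{CK}, which the paper invokes (and the paper explicitly notes that the norm control comes from ``a sequence $(\beta_n)_n$ similar to that used above''). There is no clash between $p$-summability of the diagonal and compactness of $S$.

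Second, your duality detour does not close as written. After you factor $\iota\widetilde\Phi'$ through a diagonal and peel off a compact piece $D_\alpha A\colon F'\to\ell_\infty$, you need this piece to be the adjoint of some $S\colon\ell_1\to F$. But the $p$-nuclear representation of $\iota\widetilde\Phi'$ produces functionals in $(F')'=F''$, so the natural candidate maps $\ell_1\to F''$, not into $F$. The regularity of $\mathcal K_p$ and Corollary~\ref{T'' p-compact} control the equality $\kappa_p(T)=\kappa_p(T'')$; they do \emph{not} transport a factorization $J_F\widetilde\Phi=\hat S\,\hat T_0$ with $\hat S$ landing in $F''$ back to a factorization of $\widetilde\Phi$ with $S$ landing in $F$. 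The only visible way to force the functionals into $F$ is to use the concrete sequence $(z_n)\subset F$ from the outset --- at which point the argument becomes the direct $\ell_1/M$ construction above, not a genuine duality argument.
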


\begin{proof} Suppose that $T$ belongs to $\mathcal K_p(E;F)$. Then, given $\eps>0$, there exists $y=(y_n)_n \in \ell_p(F)$ such that $T(B_E)\subset \pco\{y_n\}$, with $\|(y_n)_n\|_p \le \kappa_p(T)(1+\eps)$. We may choose $\beta=(\beta_n)_n \in B_{c_0}$ such that $(\frac{y_n}{\beta_n})_n \in \ell_p(F)$ and $\|(\frac{y_n}{\beta_n})_n\|_p \leq \|(y_n)_n\|_p(1+\eps)$. Now, with $z=(z_n)_n=(\frac{y_n}{\beta_n})_n$, $T(B_E)\subset \{\sum_{n=1}^{\infty} \alpha_n z_n \colon (\alpha_n)_n \in L\}$ where $L$ is a compact set in $B_{\ell_{p'}}$. By the factorization given in  \cite[Theorem 3.2]{SiKa}, we have the commutative diagram
$$
\xymatrix{
E \ar[r]^{T} \ar[rd]_{R} &  F  & \ell_{p'} \ar[l]_{\theta_z} \ar[dl]^{\pi}  \\
& \ell_{p'}/ \ker \theta_z  \ar[u]_{\tilde \theta_z}  &
}
$$
where $\pi$ is the projection mapping and, $\theta_z$ and $R$ are given by  $\theta_z((\alpha_n)_n)=\sum_{n=1}^{\infty} \alpha_n z_n$ and  $R(x)=[(\alpha_n)_n]$ where $(\alpha_n)_n\in L$ is a
sequence satisfying that $T(x)=\sum_{n=1}^{\infty} \alpha_n z_n$.  Since $R(B_E)=\pi(L)$, we see that $R$  is  compact and $T=\tilde \theta_z R$.

Note also that $\tilde \theta_z$  is $p$-compact. Since $\|R\|\le 1$, then 
$$
\kappa_p(T)\le \kappa_p(\tilde \theta_z) \le \|(z_n)_n\|_p \le \kappa_p(T)(1+\eps)^2.
$$

Now, using \cite[Theorem 3.1]{CK} we factorize  $\tilde \theta_z$ via a $p$-compact operator $T_0$ and a compact operator $S$,  as follows:
$$
\xymatrix{
\ell_{p'} / \ker\theta_z   \ar[rr]^{\tilde \theta_z}  \ar[dr]_{T_0}  & & F \\
  & \ell_1/M  \ar[ur]_{_{S}} &
} 
$$
where $M$ is a closed subspace of $\ell_1$. A close inspection to the proof given in \cite{CK} allows us to chose the a factorization such that $\kappa_p(\tilde \theta_z) \le \|S\| \kappa_p(T_0) \le (1 +\eps) \kappa_p(\tilde \theta_z)$, (just consider 
a sequence $(\beta_n)_n$ similar to that used above).  Whence, the factorization  is obtained together with the desired equality  $\kappa_p(T)=\inf\{\|S\|\kappa_p(T_0)\|R\|\}$.

The reverse claim is obvious.
\end{proof}

Note that if both $E'$ and $F$ have the approximation property  then $T$ belongs to $\mathcal K_p(E;F)$ if and only if $T$ belongs to  $\mathcal K_p^{min}(E;F)$. Moreover, $\kappa_p(T)=\kappa_p^{min}(T)$. We show in the next section that the same result holds if only one of the spaces ($E'$ or $F$) has the approximation property.

\section{Tensor norms}
\label{sec:tensor_norms}

Our purpose in this section is  to draw together the theory of operator ideals and tensor products for the class of $p$-compact operators. To start with our aim we use the Chevet-Saphar tensor norm to find the appropriate tensor norm associated to the ideal of $p$-compact operators.  The tensor norm obtained is $g_{p'}'$ which allows us to connect the theory of $p$-summing operators with that of $p$-compact operators. With the results known for the former class, with additional hypothesis on $E$ and $F$ we show that $\mathcal K_p(E;F)$ and  $\mathcal K_q(E;F)$ coincide for a wide range of $p$ and $q$.  We also use the limit orders of the ideal of $p$-summing operators \cite{PIE} to show that the values considered for $p$ and $q$  cannot be improved. Some other properties describing the structure of the ideal of $p$-compact operators are given.

Recall that $d_p(u)=\inf\{\|(x_n)_n\|^w_{p'} \|(y_n)_n\|_{p}\}$ where the infimum is taken over all the possible representations of  $u=\sum_{j=1}^n x_j\otimes y_j$. We denote by $/d_p$ the left injective tensor norm associated to $d_p$. Note  that $/d_p=g_{p'}'$ \cite[Theorem~7.20]{RYAN} and therefore $/d_p=(g_{p'}^*)^t$.

\begin{proposition} \label{surjective ideal}The ideal $(\mathcal K_p, \kappa_p)$ is surjective.
 \end{proposition}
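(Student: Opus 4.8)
The plan is to verify the defining property of a surjective ideal directly, exploiting the fact that the $p$-compactness of an operator, and the value of $\kappa_p$, are completely determined by the size of the image of the unit ball as measured by $\m_p$. Concretely, $\mathcal K_p$ is surjective precisely when, for every metric surjection $q\colon Z\to E$ and every $T\in\mathcal L(E;F)$, one has $T\in\mathcal K_p(E;F)$ if and only if $Tq\in\mathcal K_p(Z;F)$, with $\kappa_p(T)=\kappa_p(Tq)$. The inequality $\kappa_p(Tq)\le\kappa_p(T)$ is immediate from the ideal property together with $\|q\|\le1$, so the entire content lies in proving $\kappa_p(T)\le\kappa_p(Tq)$; note that this inequality also forces $T$ to be $p$-compact as soon as $Tq$ is, since it bounds $\m_p(T(B_E);F)$ by a finite quantity.

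The main tool I would use is the identity $\kappa_p(S)=\m_p(S(B_X);Y)$, valid for every $S\in\mathcal L(X;Y)$, which is just the definition of $\kappa_p$ read through $\m_p$. I would couple it with the elementary description of a metric surjection: writing $U_X$ for the open unit ball of a space $X$, such a $q$ satisfies $q(U_Z)=U_E$, so that $U_E\subseteq q(B_Z)$ and hence
$$
T(U_E)\subseteq Tq(B_Z).
$$
Everything then reduces to a monotonicity computation for $\m_p$.

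The computation I envisage is the chain
$$
\kappa_p(T)=\m_p(T(B_E);F)\le\m_p\big(\overline{\co}\{T(U_E)\};F\big)\le\m_p\big(\overline{\co}\{Tq(B_Z)\};F\big)=\m_p(Tq(B_Z);F)=\kappa_p(Tq).
$$
The first inequality rests on $T(B_E)\subseteq\overline{T(U_E)}\subseteq\overline{\co}\{T(U_E)\}$, which holds because $B_E=\overline{U_E}$ and $T$ is continuous, together with the monotonicity of $\m_p$ with respect to inclusion (read off at once from its definition). The second inequality uses $T(U_E)\subseteq Tq(B_Z)$, hence $\overline{\co}\{T(U_E)\}\subseteq\overline{\co}\{Tq(B_Z)\}$, and monotonicity again. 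The two equalities featuring $\overline{\co}$ are exactly the invariance $\m_p(K;E)=\m_p(\overline{\co}\{K\};E)$ noted just after the definition of $\m_p$.

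I expect the only genuinely delicate point to be the passage from the open to the closed unit ball, that is, recovering control of $T(B_E)$ from the inclusion $T(U_E)\subseteq Tq(B_Z)$: a priori a $\pco$-hull need not be closed, and for the boundary exponents $p=1$ and $p=\infty$ one cannot simply invoke compactness of the hull to close the argument. The point of routing the estimate through $\overline{\co}$ is precisely that the invariance $\m_p(K)=\m_p(\overline{\co}\{K\})$ absorbs this closure issue uniformly in $p$, so that no case distinction is required. Once the displayed chain is established, combining it with the trivial reverse inequality yields both $T\in\mathcal K_p(E;F)$ and the isometric equality $\kappa_p(T)=\kappa_p(Tq)$, which is the assertion.
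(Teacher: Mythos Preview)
Your proof is correct and follows essentially the same approach as the paper: both arguments reduce the question to the identity $\kappa_p(T)=\m_p(T(B_E);F)$ and then compare $T(B_E)$ with $Tq(B_Z)$. The paper's proof is a single line asserting $TQ(B_G)=T(B_E)$ (which is literally true for the canonical quotient $\ell_1(B_E)\twoheadrightarrow E$), whereas you treat an arbitrary metric surjection and handle the open-versus-closed ball discrepancy via the invariance $\m_p(K)=\m_p(\overline{\co}\{K\})$; this extra care is correct but not a genuinely different route.
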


\begin{proof} Let $Q\colon G\overset 1 \twoheadrightarrow  E$ be a quotient map, if $T Q$ is $p$-compact, then $TQ(B_G)=T(B_E)$ is a $p$-compact set. Thus, $T$ is $p$-compact and
$$
\begin{array}{lr}
\kappa_p(TQ)=\m_p(TQ(B_G))=\m_p(T(B_E))=\kappa_p(T).
 \end{array}
$$
\end{proof}

In order to  characterize the tensor norm associated to  $(\mathcal K_p,\kappa_p)$ we need the following simple lemma. We sketch its proof for completeness. This result should be compared with \cite[Theorem 20.11]{DF}.

\begin{lemma} \label{lema: alpha inj} Let  $(\mathcal A, \|.\|_{\mathcal A})$ be an operator ideal and let $\alpha$ be its associated tensor norm.
\begin{enumerate}
 \item[\rm (a)] If $\mathcal A$ is surjective then, $\alpha$ is left injective.
 \item[\rm (b)] If $\mathcal A$ is injective then, $\alpha$ is right injective.
\end{enumerate}
\end{lemma}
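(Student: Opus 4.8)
The plan is to recall the precise correspondence between operator ideals and tensor norms from the Defant--Floret framework, and then to argue that each operation on ideals (surjectivity, injectivity) translates directly into the corresponding operation on the associated tensor norm (left injectivity, right injectivity). The key fact I would use is the representation theorem stating that if $(\mathcal A,\|.\|_{\mathcal A})$ is a maximal Banach operator ideal with associated tensor norm $\alpha$, then for finite-dimensional spaces $M,N$ one has the isometric identification $\mathcal A(M;N)=M'\otimes_\alpha N$, and the ideal norm is computed via the finite-dimensional tensor-norm pieces through the representation theorem (see \cite[Section~17]{DF}). Since both injectivity/surjectivity of ideals and left/right injectivity of tensor norms are defined through the behaviour under metric injections and metric surjections, the strategy is to chase these definitions through the duality $\mathcal A(M;N)\cong M'\otimes_\alpha N$.

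For part (a), I would proceed as follows. Recall that a tensor norm $\alpha$ is \emph{left injective} if for every metric injection $i\colon M\hookrightarrow M_0$ the induced map $i\otimes \mathrm{id}_N \colon M\otimes_\alpha N \to M_0\otimes_\alpha N$ is a metric injection; equivalently, this concerns the behaviour of $\alpha$ under injections in the \emph{first} variable. On the ideal side, surjectivity of $\mathcal A$ means that for a metric surjection $Q\colon G\twoheadrightarrow E$ one has $T\in \mathcal A(E;F)$ with $\|T\|_{\mathcal A}=\|TQ\|_{\mathcal A}$ whenever $TQ\in\mathcal A(G;F)$. The point is that passing to adjoints turns a metric surjection $Q\colon G\twoheadrightarrow E$ into a metric injection $Q'\colon E'\hookrightarrow G'$, and under the identification $\mathcal A(E;F)\cong E'\otimes_\alpha F$ the first tensor factor is precisely the dual space where the injection $Q'$ acts. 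Thus surjectivity of $\mathcal A$, which controls the norm of $T$ via the pullback along $Q$, corresponds exactly to left injectivity of $\alpha$, which controls the norm in the first tensor slot under the injection $Q'$. I would make this rigorous on finite-dimensional spaces first (where the isometric correspondence $\mathcal A(M;N)=M'\otimes_\alpha N$ is automatic for the associated $\alpha$) and then extend to the general statement.

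For part (b), the argument is formally dual: injectivity of $\mathcal A$ controls the norm of $T$ under composition with a metric injection $F\hookrightarrow F_0$ on the \emph{target}, which under $\mathcal A(E;F)\cong E'\otimes_\alpha F$ is an injection in the \emph{second} tensor factor, and this is exactly the defining condition for $\alpha$ to be right injective. So the same definition chase, now applied to the second variable and without any passage to adjoints, yields the claim.

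The main obstacle I anticipate is the bookkeeping of the correspondence between the ideal norm and the tensor norm on the finite-dimensional level, together with the careful verification that the \emph{associated} tensor norm (rather than an unrelated one) is the object whose injectivity we are characterizing; concretely, one must ensure that the isometric identities are applied only where they are valid (finite dimensions, or under an approximation property hypothesis) and that the extension of the metric statements from finite to infinite dimensions respects the suprema defining both norms. Since the lemma is stated for general Banach operator ideals $(\mathcal A,\|.\|_{\mathcal A})$ and their associated tensor norms, I expect the cleanest route is to reduce everything to the finite-dimensional representation theorem and invoke the standard density/extension arguments from \cite[Chapter~17]{DF} rather than arguing directly in infinite dimensions.
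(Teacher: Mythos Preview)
Your approach is correct and essentially identical to the paper's proof: both reduce to finite-dimensional spaces (via a left version of \cite[Proposition~20.3]{DF}), use the isometric identification $M\otimes_\alpha N = \mathcal A(M';N)$ for $M,N\in FIN$, and then observe that a metric injection $i\colon M\hookrightarrow W$ has adjoint $i'\colon W'\twoheadrightarrow M'$ a metric surjection, so surjectivity of $\mathcal A$ forces $T\mapsto Ti'$ (which is exactly $i\otimes id_N$ under the identification) to be an isometry. The only minor clarification is that no genuine ``extension to infinite dimensions'' is needed after the finite-dimensional verification, since left/right injectivity of a tensor norm is determined on $FIN$.
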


\begin{proof} Suppose $\mathcal A$ is surjective. Using a `left version' of  \cite[Proposition 20.3 (1)]{DF},  we only need to see that $\alpha$ is left injective on  $FIN$, the class of all finite dimensional spaces.

Fix $N, M, W\in FIN$ such that $i\colon M \overset 1 \hookrightarrow W$, then we have the commutative diagram
$$
\xymatrix{
& M\otimes_\alpha N \ar[r]^{i\otimes id_N} \ar@{=}[d]  & W\otimes_\alpha N \ar@{=}[d]  \\
 & \mathcal A(M';N) \ar[r]_{\phi}   &
\mathcal A(W';N)
}
$$
where $\phi$ is given by $T\mapsto Ti'$. As $i$ is an isometry, $i'$ is a metric surjection. Now, since $\mathcal A$ is   surjective $\phi$ is an isometry, which proves (a).

The proof of (b) follows easily with a similar reasoning.
\end{proof}

From \cite[Proposition 3.11]{DPS_adj}  we have  $\mathcal N^p(\ell_1^n; N)\overset 1 = \mathcal K_p(\ell_1^n; N)$, for every $n$ and every finite dimensional space $N$. Since  $\mathcal N^p$ is associated to the tensor norm $d_p$, we have the following result.

\begin{theorem}\label{/dp}
The operator ideal $(\mathcal K_p,\kappa_p)$ is associated to the tensor norm $/d_p$, for every $1\leq p < \infty$.
\end{theorem}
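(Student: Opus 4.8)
The plan is to identify the tensor norm associated to the operator ideal $(\mathcal K_p,\kappa_p)$ by matching it against $/d_p$ on the level of finite dimensional spaces, and then invoking the general correspondence between maximal (or, more precisely, associated) tensor norms and operator ideals. The key structural input is that a tensor norm is completely determined by its values on finite dimensional spaces together with its injective/surjective behaviour, so the proof reduces to an identity on $FIN$.

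\medskip

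First I would recall that by Proposition~\ref{surjective ideal} the ideal $\mathcal K_p$ is surjective, so by Lemma~\ref{lema: alpha inj}(a) its associated tensor norm $\alpha$ is left injective. Next I would use the cited equality $\mathcal N^p(\ell_1^n;N)\overset 1=\mathcal K_p(\ell_1^n;N)$, valid for every $n$ and every finite dimensional $N$, which says that on the building blocks $\ell_1^n$ the $\kappa_p$-norm coincides isometrically with the $\v^p$-norm. Since $\mathcal N^p$ is the ideal associated to $d_p$, this identifies the associated tensor norm of $\mathcal K_p$ with $d_p$ on tensor products of the form $(\ell_1^n)'\otimes N=\ell_\infty^n\otimes N$, that is, whenever the \emph{left} factor is an $\ell_\infty^n$. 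The passage from $d_p$ on these special spaces to $/d_p$ on all finite dimensional spaces is exactly what left injectivity buys us: $/d_p$ is by definition the smallest left injective tensor norm dominating (agreeing with) $d_p$, obtained by embedding the left factor isometrically into an $\ell_\infty$-space, and $\ell_1^n$ is the space whose dual is $\ell_\infty^n$. I would make this precise by using the metric extension/metric surjection property: embed an arbitrary finite dimensional $M$ into some $\ell_\infty^n$ and use left injectivity of both $\alpha$ and $/d_p$ to reduce their comparison to the $\ell_\infty^n$ case, where they coincide.

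\medskip

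Concretely, the argument runs: both $\alpha$ and $/d_p$ are left injective tensor norms that agree on $\ell_\infty^n\otimes N$ (the former because $\kappa_p=\v^p$ there, the latter by construction of $/d_p$ from $d_p$). A left injective norm is determined on $FIN$ by its values when the left factor is an $\mathcal L_\infty$-type space, because any $M\in FIN$ admits a metric injection $M\hookrightarrow \ell_\infty^n$ and left injectivity forces the norm on $M\otimes N$ to be the restriction of the norm on $\ell_\infty^n\otimes N$. Hence $\alpha=/d_p$ on all of $FIN$, and since the associated tensor norm of an ideal is the unique tensor norm whose finitely generated part realizes the ideal isometrically, we conclude $\alpha=/d_p$ globally. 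Finally I would invoke $/d_p=g_{p'}'$ to record the representation already noted in the text.

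\medskip

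The main obstacle I anticipate is making the reduction ``left injective and agrees on $\ell_\infty^n\otimes N$ implies agrees on $FIN$'' rigorous rather than merely plausible. One must verify that the finite dimensional identity $\mathcal K_p(\ell_1^n;N)=\mathcal N^p(\ell_1^n;N)$ really does pin down the value of $\alpha$ on $\ell_\infty^n\otimes N$ (via the duality $\ell_\infty^n\otimes N=\mathcal A((\ell_\infty^n)';N)=\mathcal A(\ell_1^n;N)$ used in Lemma~\ref{lema: alpha inj}), and then that left injectivity of $\alpha$ propagates this to arbitrary $M$. The cleanest route is probably to quote directly the characterization of tensor norms by their finite hull together with \cite[Proposition~20.3]{DF} and the definition of the left injective associate, so that the bulk of the work is recognizing that the hypothesis supplies precisely the data on $\ell_\infty^n\otimes N$ that the left injective associate needs. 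I expect the verification to be short once the correct Defant--Floret statements are lined up, with no heavy computation required.
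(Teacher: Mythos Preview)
Your proposal is essentially the paper's own proof: show $\alpha$ is left injective via surjectivity of $\mathcal K_p$, identify $\alpha$ with $d_p$ on $\ell_\infty^n\otimes N$ through the isometry $\mathcal K_p(\ell_1^n;N)\overset1=\mathcal N^p(\ell_1^n;N)$, and conclude $\alpha=/d_p$. The only wrinkle is your sentence ``any $M\in FIN$ admits a metric injection $M\hookrightarrow \ell_\infty^n$,'' which is false in general (the unit ball of $M'$ need not be a polytope); the paper sidesteps this by invoking a left version of \cite[Proposition~20.9]{DF}, which packages the required $(1+\varepsilon)$-approximation argument, and you already anticipated that the clean route is to cite the appropriate Defant--Floret statement rather than argue it by hand.
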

\begin{proof} Denote by $\alpha$ the tensor norm associated to $\mathcal K_p$.  By Proposition~\ref{surjective ideal} and the above lemma,  $\alpha$ is left injective. Note that for every $n$ and every finite dimensional space $N$  we have the isometric identities
$$
\ell_\infty^n \otimes_{d_p} N  = \mathcal N^p(\ell_1^n; N)  = \mathcal K_p(\ell_1^n; N)  =\ell_\infty^n \otimes_{\alpha} N.
$$
Now, applying a  `left version' of  \cite[Proposition 20.9]{DF}, we conclude that $\alpha=/d_p$.
\end{proof}

\begin{proposition}\label{injective}
The operator ideal $(\mathcal K_p,\kappa_p)$ is not injective, for any $1\leq p < \infty$.
\end{proposition}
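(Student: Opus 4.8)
The plan is to show that $\mathcal K_p$ fails to be injective by exhibiting a single operator $T$ together with an isometric embedding $i\colon F \overset{1}{\hookrightarrow} G$ such that $iT$ is $p$-compact but $T$ is not. Recall that an ideal $\mathcal A$ is injective precisely when, for every isometric embedding $i$, the $p$-compactness of $iT$ forces that of $T$ with equal norm. Equivalently, by Lemma~\ref{lema: alpha inj}(b), injectivity of $\mathcal K_p$ would force its associated tensor norm $/d_p = g_{p'}'$ to be right injective; so an efficient route is to argue that $g_{p'}'$ is \emph{not} right injective, which separates this case cleanly from the surjective/left-injective situation already established in Theorem~\ref{/dp}.

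The most transparent approach is the direct counterexample. First I would take $T$ to be a natural operator into a space $F$ that can be isometrically embedded into an injective space $G$ (for instance $G = \ell_\infty(\Gamma)$, or more concretely $G = C(K)$ or some $\ell_\infty^n$ in the finite-dimensional model), chosen so that $T(B_E)$ is relatively compact but \emph{not} relatively $p$-compact in $F$, while its image under $i$ becomes $p$-compact in the larger space $G$. The guiding intuition comes from the dual picture: by Corollary~\ref{T Cuasi}, $T \in \mathcal K_p(E;F)$ iff $T' \in \mathcal{QN}_p(F';E')$ with equal norms, and $\mathcal{QN}_p = \mathcal N_p^{inj}$ is the injective hull of $\mathcal N_p$, which is emphatically not surjective; dualizing this asymmetry is exactly what should obstruct injectivity of $\mathcal K_p$. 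Concretely, one expects the identity operator on a suitable finite-dimensional scale (comparing $\mathcal K_p(E;F)$ with $\mathcal K_p(E;\ell_\infty^n)$) to witness the strict inequality of norms, since $p$-compactness of $iT$ only controls the size of the image inside $G$ and does not descend to $F$.

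Alternatively, and perhaps more cleanly for the paper's tensor-norm framework, I would argue abstractly: were $\mathcal K_p$ injective, Lemma~\ref{lema: alpha inj}(b) combined with Theorem~\ref{/dp} would give that $/d_p$ is both left and right injective, hence injective. But a left-injective hull of a tensor norm that is not already right injective is generally strictly smaller than the fully injective hull, and for $d_p$ the two sides behave differently (this reflects the fact that $d_p = g_p^t$ is not symmetric, noted in the preliminaries). So I would pin down a finite-dimensional test configuration $M \overset{1}{\hookrightarrow} W$ and spaces such that the map $\mathcal K_p(E;M) \to \mathcal K_p(E;W)$, or equivalently $g_{p'}'$ evaluated on $N \otimes M$ versus $N \otimes W$, fails to be an isometric embedding, contradicting right injectivity. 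The computation would rely on the established identity $\mathcal N^p(\ell_1^n;N) \overset{1}{=} \mathcal K_p(\ell_1^n;N)$ on finite-dimensional spaces, transporting the known failure of right injectivity of $d_p$ (equivalently, the non-surjectivity of $\mathcal N_p$) into the $p$-compact setting.

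The main obstacle I anticipate is producing an explicit, verifiable instance where $p$-compactness genuinely improves upon passing to the injective envelope while the norm strictly jumps --- that is, separating $\kappa_p$ computed in $F$ from $\kappa_p$ computed in $G$. Merely knowing $\mathcal{QN}_p$ is not surjective gives the phenomenon at the level of adjoints, but one must be careful that the relevant operator has a well-behaved adjoint and that the embedding $i$ is a genuine metric injection rather than just a bounded injection; keeping track of whether the norm inequality is strict (and not merely the membership failing) is where the delicate estimates live. For this reason I would favor the abstract tensor-norm argument via Lemma~\ref{lema: alpha inj}(b), reducing the whole claim to the single clean statement that $g_{p'}'$ is not right injective, and then citing or verifying that one finite-dimensional inequality directly.
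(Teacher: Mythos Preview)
Your abstract route via Lemma~\ref{lema: alpha inj}(b) is exactly the paper's strategy: assume $\mathcal K_p$ injective, conclude $/d_p$ is right injective, and seek a contradiction. The gap is that you never actually close the argument. You say you would ``cite or verify'' that $g_{p'}'$ is not right injective, proposing to transport the non-surjectivity of $\mathcal N_p$ or $\mathcal{QN}_p$, or to check ``one finite-dimensional inequality directly'' --- but none of these is carried out, and the $\mathcal N_p$/$\mathcal{QN}_p$ route is not the right lever. The left-injective hull $/d_p$ need not inherit failure of right injectivity from $d_p$, and $\mathcal{QN}_p$ is a minimal-type ideal for which the surjectivity question does not interface cleanly with \cite[Theorem~20.11]{DF}.

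The paper's completion is short and uses the \emph{maximal} ideal in the picture. Since $/d_p=(g_{p'}^*)^t$, right injectivity of $/d_p$ is equivalent to left injectivity of $g_{p'}^*$. But $g_{p'}^*$ is the tensor norm of the maximal ideal $\Pi_p$, so \cite[Theorem~20.11]{DF} (the converse of Lemma~\ref{lema: alpha inj}) would force $\Pi_p$ to be surjective. This is false: by Grothendieck's theorem the identity $\ell_2\to\ell_2$ lies in $\Pi_p^{sur}$ (it factors through $L_1$), yet it is not $p$-summing. So the missing idea is simply to transpose and invoke the well-known non-surjectivity of $\Pi_p$, rather than to hunt for a direct finite-dimensional counterexample or to route through $\mathcal N_p$.
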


\begin{proof} Suppose that $\mathcal K_p$ is injective. By Theorem~\ref{/dp} and Lemma~\ref{lema: alpha inj}  we see that $/d_p$  the associated tensor norm to  $\mathcal K_p$, is right injective. Thus, its transpose $g_{p'}^*$  is left injective. Now, by  \cite[Theorem 20.11]{DF}, $\Pi_p$ is surjective which is a contradiction. Note that, by Grothendieck's theorem \cite[Theorem 23.10]{DF}, $id\colon \ell_2 \to\ell_2$ belongs to $\Pi_p^{sur}$ and obviously   is not $p$-summing.
\end{proof}

As a consequence we show that the $\m_p$-measure of a set, depends on the space which contains the set.

\begin{corollary}\label{mp-cambia} Given $1\le p<\infty$, there exist a Banach space $G$, a subspace $F\subset G$ and a set $K\subset F$ such that $K$ is $p$-compact in $G$ but $K$ fails to be $p$-compact in $F$.
\end{corollary}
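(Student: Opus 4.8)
The plan is to read Corollary~\ref{mp-cambia} as the geometric translation of the non-injectivity of $\mathcal{K}_p$ proved in Proposition~\ref{injective}. Recall that a Banach operator ideal $(\mathcal A,\|\cdot\|_{\mathcal A})$ is injective precisely when, for every metric injection $\iota\colon F\hookrightarrow G$ and every $T\in\mathcal L(E;F)$, one has $\|\iota T\|_{\mathcal A}=\|T\|_{\mathcal A}$, with the convention $\|\cdot\|_{\mathcal A}=\infty$ off the ideal. First I would specialize this to $\mathcal{K}_p$: since $\iota$ is an isometric embedding, $\iota T(B_E)=T(B_E)$ viewed inside $G$, so with $K:=T(B_E)\subset F\subset G$ we get $\kappa_p(T)=\m_p(K;F)$ and $\kappa_p(\iota T)=\m_p(K;G)$. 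Conversely, given any set $K\subset F$, the operator $\Psi\colon\ell_1(K)\to F$ from the proof of Theorem~\ref{K en E''} satisfies $K\subset\Psi(B_{\ell_1(K)})\subset\overline{\co}(K)$, whence $\kappa_p(\Psi)=\m_p(K;F)$ and $\kappa_p(\iota\Psi)=\m_p(K;G)$ by the invariance of $\m_p$ under closed absolutely convex hulls. Thus exhibiting $G\supset F\supset K$ with $K$ being $p$-compact in $G$ but not in $F$ is exactly the same as exhibiting an operator $T$ and a metric injection $\iota$ with $\iota T$ $p$-compact while $T$ is not; that is, the failure of injectivity of $\mathcal{K}_p$ at the level of membership.

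Granting such a $T$ and $\iota$, the conclusion is immediate: set $K=T(B_E)\subset F$. Then $\iota T\in\mathcal{K}_p(E;G)$ forces $\m_p(K;G)<\infty$, so $K$ is $p$-compact in $G$, while $T\notin\mathcal{K}_p(E;F)$ reads $\m_p(K;F)=\infty$, so $K$ is not $p$-compact in $F$. Hence all the work lies in producing the operator. For this I would retrace the proof of Proposition~\ref{injective} tracking membership rather than only norms. That proof reverses the chain $\mathcal{K}_p\text{ injective}\Leftrightarrow{/d_p}\text{ right injective}\Leftrightarrow g_{p'}^*\text{ left injective}\Leftrightarrow\Pi_p\text{ surjective}$ and contradicts the last statement via Grothendieck's theorem, which gives $\mathcal L(\ell_1,\ell_2)=\Pi_1\subset\Pi_p$ and hence $id_{\ell_2}\in\Pi_p^{sur}\setminus\Pi_p$. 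Since each correspondence (Lemma~\ref{lema: alpha inj} together with the metric theory of tensor norms in \cite{DF}) is an equivalence at the level of membership, the strict inclusion $\Pi_p^{sur}\supsetneq\Pi_p$ transports back to $\mathcal{K}_p^{inj}\supsetneq\mathcal{K}_p$, yielding the desired $T$ and $\iota$. Alternatively one may dualize directly: by Theorem~\ref{Kpdual=QNp} and the regularity of $\mathcal{K}_p$, non-injectivity of $\mathcal{K}_p$ is equivalent to non-surjectivity of $\mathcal{QN}_p$, and a witness $S$ for the latter dualizes through Corollary~\ref{T Cuasi} to the sought operator.

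The step I expect to be the main obstacle is exactly this last point. Proposition~\ref{injective} literally only denies the isometric identity $\m_p(K;F)=\m_p(K;G)$, whereas Corollary~\ref{mp-cambia} demands the stronger \emph{membership} failure $\m_p(K;F)=\infty>\m_p(K;G)$. I must therefore verify that the obstruction exploited in Proposition~\ref{injective} is itself a membership phenomenon and not a mere norm discrepancy: this is guaranteed because $id_{\ell_2}$ is honestly not $p$-summing, so $\Pi_p^{sur}\neq\Pi_p$ \emph{as sets}, and because the tensor-norm and duality equivalences preserve membership. The only real care required is bookkeeping of the spaces $E,F,G$ through the successive transpositions, so that the final example indeed lands as $K\subset F\subset G$ with $\iota\colon F\hookrightarrow G$ an isometric embedding.
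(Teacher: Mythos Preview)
Your argument is essentially identical to the paper's: from the non-injectivity of $(\mathcal K_p,\kappa_p)$ established in Proposition~\ref{injective}, extract Banach spaces $E$, $F\hookrightarrow G$ and an operator $T\in\mathcal L(E;F)$ with the composition $p$-compact but $T$ itself not, and put $K=T(B_E)$. The paper's proof consists of exactly these two lines and does not pause over the membership-versus-norm distinction you flag; it simply reads ``not injective'' in the standard operator-ideal sense, which by definition yields a membership witness, so the extra bookkeeping and the alternative dualization route you sketch are not needed.
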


\begin{proof}
Since $(\mathcal K_p,\kappa_p)$ in not injective, there exist Banach spaces $E, F$ and $G$, $F\stackrel{I_{F,G}}\hookrightarrow G$ and an operator $T\in \mathcal L(E;F)$ such that $I_{F,G} T$ is $p$-compact but $T$ is not. Taking $K=T(B_E)$, we see that  $\m_p(K;G)<\infty$ while $\m_p(K;F)=\infty$.
\end{proof}

Now we characterize $\mathcal K_p^{max}$, the maximal  hull of the operator ideal $\mathcal K_p$ in terms of the ideal of  $p$-summing operators $\Pi_p$.

\begin{corollary}\label{maxhull} The operator  ideal $\mathcal K_p^{max}$ coincides isometrically with  $\Pi_p^{dual}$.
 \end{corollary}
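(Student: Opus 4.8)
The plan is to combine Theorem~\ref{/dp} with the standard duality dictionary between tensor norms and maximal operator ideals developed in \cite{DF}. Recall that the maximal hull of an operator ideal is governed by the dual tensor norm: if $(\mathcal A,\|\cdot\|_{\mathcal A})$ is associated to a finitely generated tensor norm $\alpha$, then $\mathcal A^{max}$ is the maximal ideal associated to $\alpha$, and this maximal ideal is precisely the one associated by duality to the adjoint norm $\alpha'$. Since Theorem~\ref{/dp} tells us that $\mathcal K_p$ is associated to $/d_p$, the first step is to record that $\mathcal K_p^{max}$ is the maximal ideal associated to $/d_p$, and hence the dual of the maximal ideal associated to its adjoint $(/d_p)'$.

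The second step is to identify $(/d_p)'$ explicitly. From the discussion preceding Proposition~\ref{surjective ideal} we have $/d_p = g_{p'}'$, equivalently $/d_p = (g_{p'}^*)^t$. Taking the adjoint of the left-injective associate corresponds, under the general theory, to taking the right-projective associate of the adjoint; concretely I would use the identity $(/\alpha)' = (\alpha')\backslash$ (the adjoint of the left injective associate is the right projective associate of the adjoint) together with the known fact that $d_p' = g_{p'}$ (or directly that $(g_{p'}')' = g_{p'}$). The upshot is that the adjoint of $/d_p$ is the tensor norm whose associated maximal ideal is $\Pi_p$, the ideal of $p$-summing operators. This is exactly the point already exploited in the proof of Proposition~\ref{injective}, where the relationship between $g_{p'}^*$, $/d_p$ and $\Pi_p$ is invoked; I would reuse that same correspondence here.

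The third step is then purely formal: once $\mathcal K_p^{max}$ is the dual of the maximal ideal associated to a tensor norm whose associated maximal ideal is $\Pi_p$, we read off
$$
\mathcal K_p^{max} \overset{1}{=} \Pi_p^{dual},
$$
with the equality being isometric because all the identifications in \cite{DF} between finitely generated tensor norms and maximal Banach ideals are isometric. I would be careful to state the duality at the level of \emph{maximal} ideals, since $\Pi_p$ is itself maximal, so that $\Pi_p^{dual}$ is again maximal and the comparison with $\mathcal K_p^{max}$ is legitimate.

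The main obstacle I anticipate is purely bookkeeping rather than conceptual: keeping the left/right and injective/projective conventions of \cite{DF} straight while passing through the chain $\mathcal K_p \rightsquigarrow /d_p \rightsquigarrow (/d_p)' \rightsquigarrow \Pi_p$, and in particular verifying that the transpose and the duality operations interact with the dual-ideal construction so that the $p$-summing norm (rather than some $p'$-variant) comes out. The cleanest route is probably to lean on the already-established equality $/d_p = (g_{p'}^*)^t$ and the fact, used in Proposition~\ref{injective}, that $g_{p'}^*$ is the tensor norm of $\Pi_p$, so that the present corollary becomes essentially a restatement of Theorem~\ref{/dp} under the maximal-hull/duality correspondence of \cite[Chapter~17]{DF}.
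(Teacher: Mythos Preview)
Your ``cleanest route'' in the last paragraph is exactly the paper's proof: since $\mathcal K_p^{max}$ is associated to $/d_p=(g_{p'}^*)^t$ and $\Pi_p$ is the maximal ideal associated to $g_{p'}^*$, one applies directly the correspondence between the transpose of a tensor norm and the dual of the associated maximal ideal (Corollary~3 in \cite[17.8]{DF}) to conclude $\mathcal K_p^{max}\overset1=\Pi_p^{dual}$. So the endpoint of your plan matches the paper verbatim.

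The detour in your first two paragraphs, however, is not just longer but conceptually off. You write that the maximal ideal associated to $\alpha$ is ``the one associated by duality to the adjoint norm $\alpha'$'' and then proceed to compute $(/d_p)'$. This conflates two distinct operations: the \emph{adjoint} tensor norm $\alpha'$ corresponds to the \emph{adjoint} ideal $\mathcal A^*$, whereas the \emph{dual} ideal $\mathcal A^{dual}$ corresponds to the \emph{transpose} $\alpha^t$. Computing $(/d_p)'=g_{p'}\backslash$ leads you toward the adjoint ideal $\Pi_p^*$ (or integral-type ideals), not toward $\Pi_p^{dual}$; the identity $(/\alpha)'=(\alpha')\backslash$ is correct but irrelevant here. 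Drop that middle step entirely and go straight from $/d_p=(g_{p'}^*)^t$ to the conclusion via the transpose/dual correspondence, as you yourself suggest at the end.
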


\begin{proof} The maximal hull of  $\mathcal K_p$ is also associated to the tensor norm $/d_p=(g_{p'}^*)^t$. Since the ideal of $p$-summing operators $\Pi_p$ is associated to the tensor norm $g_{p'}^*$, by Corollary~3~in~\cite[17.8]{DF} the result follows.
\end{proof}

By \cite[Proposition 21.1 (3)]{DF} and the fact that $/(d_p/) = /d_p$ we see that the tensor norm  $/d_p$ is totally accessible (see also \cite[Corollary~7.15]{RYAN}).  Therefore,  we have the following two results. For the first one we use \cite[Proposition~21.3]{DF} and for the second one we use \cite[Corollary~22.2]{DF}.

\begin{remark} \label{remark grossa}
The operator ideal $\mathcal{K}_p^{max} \overset 1 = \Pi_p^{dual}$ is totally accessible.
\end{remark}

\begin{remark} \label{remark kp-min} For any Banach spaces $E$ and $F$,
$\mathcal{K}_p^{min}(E;F) \overset 1 = E'\widehat \otimes_{/d_p} F$.
\end{remark}

With the help of Corollary~\ref{maxhull} we obtain an easy way to compute the $\kappa_p$ norm of a $p$-compact operator: just take the $p$-summing norm of its adjoint. Moreover, the same holds for the minimal norm. 
We also have the following isometric relations. 

\begin{proposition}\label{inclusiones isometricas} The isometric inclusions hold
$$
\xymatrix{
\mathcal K_p^{min} \ar@{^{(}->}[r]^{1} & \mathcal K_p \ar@{^{(}->}[r]^{1\;\;\;\;\;\;\;\;\;\;\;\;\;\;} &  {\mathcal K_p^{max} \overset 1 = \Pi_p^{dual}.}
}
$$
In particular, $\mathcal K_p^{min}$ and $\mathcal K_p$ are totally accessible. 
\end{proposition}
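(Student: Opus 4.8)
The plan is to upgrade the two contractive inclusions $\mathcal K_p^{min}\hookrightarrow\mathcal K_p\hookrightarrow\mathcal K_p^{max}$ to isometries and then read off total accessibility. The inclusions themselves, together with the norm estimates $\kappa_p^{max}(T)\le\kappa_p(T)\le\kappa_p^{min}(T)$ valid on the respective domains, are part of the general theory of normed operator ideals \cite{DF}, so only the equality of norms needs attention, and I would treat the two inclusions separately. For the left one, $\mathcal K_p^{min}\hookrightarrow\mathcal K_p$, I would exploit that $/d_p$ is totally accessible: by definition this means that for all $E,F$ the canonical map $\mathcal K_p^{min}(E;F)=E'\widehat\otimes_{/d_p}F\to\mathcal K_p^{max}(E;F)$ (Remark~\ref{remark kp-min} and Corollary~\ref{maxhull}) is a metric injection, so that $\kappa_p^{min}=\kappa_p^{max}$ on $\mathcal K_p^{min}$. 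Inserting this equality between the contractive estimates $\kappa_p^{max}\le\kappa_p\le\kappa_p^{min}$ forces $\kappa_p^{min}(T)=\kappa_p(T)=\kappa_p^{max}(T)$ for every $T\in\mathcal K_p^{min}$, which is exactly the assertion that $\mathcal K_p^{min}\hookrightarrow\mathcal K_p$ is isometric.

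The substantial point is the right inclusion $\mathcal K_p\hookrightarrow\mathcal K_p^{max}=\Pi_p^{dual}$, and here I would pass to adjoints. By Corollary~\ref{T Cuasi} one has $\kappa_p(T)=\v_p^Q(T')$, while Corollary~\ref{maxhull} gives $\|T\|_{\mathcal K_p^{max}}=\pi_p(T')$; thus this inclusion is isometric precisely when $\v_p^Q=\pi_p$ on the quasi $p$-nuclear operators, i.e. $\mathcal{QN}_p\overset{1}{\hookrightarrow}\Pi_p$. The inequality $\pi_p\le\v_p^Q$ is elementary, coming from H\"older's inequality applied to the defining domination of a quasi $p$-nuclear operator. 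For the reverse I would use $\mathcal{QN}_p=\mathcal N_p^{inj}$ to write $\v_p^Q(S)=\v_p(J_FS)$ for an isometric embedding $J_F\colon F\hookrightarrow\ell_\infty(\Gamma)$, while $\pi_p(S)=\pi_p(J_FS)$ by injectivity of $\Pi_p$; this reduces the claim to $\v_p(R)=\pi_p(R)$ for a $p$-nuclear operator $R\colon E\to\ell_\infty(\Gamma)$ with injective range. When $\Gamma$ is finite, $R$ has finite rank, a Pietsch measure for $R$ may be chosen finitely supported, and discretizing it produces a quasi $p$-nuclear representation of norm at most $\pi_p(R)$; then Persson--Pietsch's fact that $\v_p=\v_p^Q$ into injective spaces \cite[Satz~38]{PerPi} turns this into $\v_p(R)\le\pi_p(R)$, hence equality.

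It remains to pass from finite to arbitrary $\Gamma$ by approximating with the coordinate projections $P_{\Gamma_0}R$, and finally to deduce the last statement: once both inclusions are metric, $\mathcal K_p$ sits isometrically between $\mathcal K_p^{min}$ and the totally accessible ideal $\mathcal K_p^{max}=\Pi_p^{dual}$ (Remark~\ref{remark grossa}), so that $\mathcal K_p$ and a fortiori $\mathcal K_p^{min}$ are totally accessible. I expect the main obstacle to be exactly this passage to arbitrary index sets in the reverse inequality $\v_p^Q\le\pi_p$: the $p$-summing norm is recovered from finite sections of the range by maximality of $\Pi_p$, whereas $\v_p^Q$ is an infimum, so the technical heart is to show that $\v_p^Q$ is likewise determined by finite range-sections---equivalently, that $\kappa_p$ is finitely generated on $\mathcal K_p$.
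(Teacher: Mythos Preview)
Your argument for the first inclusion is the paper's: total accessibility of $/d_p$ yields $\kappa_p^{min}=\kappa_p^{max}$ on $\mathcal K_p^{min}$, and the sandwich $\kappa_p^{max}\le\kappa_p\le\kappa_p^{min}$ then gives the isometry. For the second inclusion you take a different and more explicit route. The paper simply infers from $\mathcal K_p^{min}\overset 1\hookrightarrow\mathcal K_p^{max}$ that ``all the inclusions above are isometries'', but the sandwich only pins down the norms on $\mathcal K_p^{min}$; for $T\in\mathcal K_p\setminus\mathcal K_p^{min}$ it says nothing about $\kappa_p(T)$ versus $\kappa_p^{max}(T)$, so an additional argument is genuinely needed. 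Your reduction via Corollaries~\ref{T Cuasi} and~\ref{maxhull} to the identity $\v_p^Q=\pi_p$ on $\mathcal{QN}_p$ (equivalently, $\mathcal{QN}_p\overset 1\hookrightarrow\Pi_p$) is exactly the right extra input and makes the proof complete.

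The sketch you give for $\v_p^Q\le\pi_p$ can be shortened, and the obstacle you anticipate disappears. After embedding $F$ isometrically into $\ell_\infty(\Gamma)$ you have a $p$-nuclear $R=J_FS\colon E\to\ell_\infty(\Gamma)$ and want $\v_p(R)\le\pi_p(R)$. There is no need to discretise a Pietsch measure or approximate by finite $\Gamma$: since $\ell_\infty(\Gamma)$ is a $C(K)$-space it has the metric approximation property, so the canonical map $E'\widehat\otimes_{g_p}\ell_\infty(\Gamma)\to\mathcal I_p(E;\ell_\infty(\Gamma))$ is a metric injection and hence $\v_p(R)=\iota_p(R)$; and since $\ell_\infty(\Gamma)$ is injective and $\Pi_p=\mathcal I_p^{inj}$ isometrically, $\iota_p(R)=\pi_p(R)$. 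Chaining these equalities gives $\v_p^Q(S)=\v_p(R)=\iota_p(R)=\pi_p(R)=\pi_p(S)$ directly, so the ``finitely generated'' issue never arises.
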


\begin{proof} Let $E$ and $F$ be Banach spaces. We have
$$
\xymatrix{
\mathcal K_p^{min}(E;F) \ar@{^{(}->}[r]^{\;\;\; \leq \; 1} & \mathcal K_p(E;F) \ar@{^{(}->}[r]^{\; \leq \; 1  \; \; \; \; \; \; \; \; \; \; \; \; \; \; \; \; \; \; \; \;} &  {\mathcal K_p^{max}(E;F) \overset 1 = \Pi_p^{dual}(E;F).}
}
$$
Now, using the previous remark and \cite[Corollary~22.5]{DF}, we have
$\mathcal K_p^{min}(E;F) \overset{1}\hookrightarrow  \mathcal K_p^{max}(E;F)\overset 1 =  \Pi_p^{dual}(E;F)$, which implies that  all the inclusions above are isometries.
\end{proof}

The definition of the $\kappa_p$-approximation property was given in terms of operators in \cite{DPS_dens}:  A Banach space  $F$ has the $\kappa_p$-approximation property if, for every Banach space $E$, $\mathcal{F}(E;F)$ is $\kappa_p$-dense in $\mathcal K_p(E;F)$.  In other words,
$$\overline{\mathcal F(E;F)}^{\kappa_p} \overset 1 = \mathcal K_p(E;F).$$
On the other hand, by Remark \ref{remark grossa}, \cite[Corollary~22.5]{DF} and the previous proposition we have
$$ \mathcal K_p^{min}(E;F) \overset 1 = \overline{\mathcal F(E;F)}^{\mathcal K_p^{max}} \overset 1 = \overline{\mathcal F(E;F)}^{\kappa_p}.
$$
Therefore, $F$ has the $\kappa_p$-approximation property if and only if $\mathcal K_p^{min}(E;F) \overset 1 = \mathcal K_p (E;F)$, for every Banach space $E$.

Any Banach space with the approximation property enjoys the $\kappa_p$-approximation property. This result can be deduced from \cite[Theorem~3.1]{DPS_dens}. Below, we give a short proof using the language of operator ideals. It is worthwhile mentioning that every Banach space has the $\kappa_2$-approximation property (which can be deduced from \cite[Theorem 6.4]{SiKa}) and for each $p\ne 2$ there exists a Banach space whose dual lacks the $\kappa_p$-approximation property \cite[Theorem~2.4]{DPS_dens}.

\begin{proposition}\label{pa implica kp-pa}  If a Banach space has the approximation property  then it has the $\kappa_p$-approximation property.
\end{proposition}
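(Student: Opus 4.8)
The plan is to prove the statement directly by establishing that, when $F$ has the approximation property, the finite rank operators are $\kappa_p$-dense in $\mathcal K_p(E;F)$ for every Banach space $E$; by the remarks preceding the statement this is exactly the assertion that $F$ has the $\kappa_p$-approximation property. The decisive tool is the three-factor factorization of Proposition~\ref{factor}, which places a genuinely $p$-compact operator in the \emph{middle} and ordinary compact operators on the outside.

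Concretely, I would fix $T\in\mathcal K_p(E;F)$ and write, using Proposition~\ref{factor}, $T = S\,T_0\,R$ with $R\in\mathcal L(E;G_1)$ and $S\in\mathcal L(G_2;F)$ compact and $T_0\in\mathcal K_p(G_1;G_2)$ (here $G_1,G_2$ denote the intermediate spaces produced by the factorization). Since $S$ is compact, $\overline{S(B_{G_2})}$ is a compact subset of $F$, so the approximation property of $F$ furnishes finite rank operators $v_k\in\mathcal F(F;F)$ with $\sup_{y\in \overline{S(B_{G_2})}}\|v_k y - y\|\to 0$; equivalently $S_k:=v_kS\in\mathcal F(G_2;F)$ satisfies $\|S - S_k\|\to 0$ in the operator norm. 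Each $S_kT_0R$ is then a finite rank operator, and the ideal property of $\kappa_p$ yields
\[
\kappa_p\big(T - S_kT_0R\big) = \kappa_p\big((S-S_k)T_0R\big)\le \|S - S_k\|\,\kappa_p(T_0)\,\|R\|\longrightarrow 0 .
\]
Hence $T\in\overline{\mathcal F(E;F)}^{\kappa_p}$, which is the required density.

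The point I want to stress is why the middle factor must carry the $\kappa_p$-norm, since this is where the only real obstacle lies. The naive attempt---approximate the identity of $F$ uniformly on the compact set $\overline{T(B_E)}$ by a finite rank $v$ and use $vT$---controls $\kappa_p(T - vT)$ only by $\|((\mathrm{id}_F - v)y_n)_n\|_{\ell_p(F)}$ for a defining sequence $(y_n)\in\ell_p(F)$ of $T$, and estimating this $\ell_p$-tail forces a uniform bound on $\|\mathrm{id}_F - v\|$, i.e. the \emph{bounded} approximation property rather than the plain one. The factorization circumvents exactly this difficulty: the outer factor $S$ is approximated only in the operator norm (which the plain approximation property delivers for compact operators with range in $F$), while the finite $\kappa_p$-cost is confined to the fixed middle operator $T_0$ and is left untouched by the approximation. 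This is what makes the conclusion available under the approximation property alone, and it is the reason Proposition~\ref{factor} is the natural input here.
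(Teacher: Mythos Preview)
Your proof is correct and follows essentially the same strategy as the paper: factor $T$ so that the map landing in $F$ is compact, use the approximation property of $F$ to approximate that compact factor in operator norm, and push the resulting estimate through the ideal inequality for $\kappa_p$. The only cosmetic differences are that the paper invokes the two-factor version of the factorization (\cite[Theorem~3.1]{CK}, i.e.\ $T=ST_0$ with $S$ compact) rather than the three-factor Proposition~\ref{factor}, and it packages the final approximation step as an appeal to \cite[Proposition~25.2~(1)~b.]{DF} instead of writing out the density computation explicitly.
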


\begin{proof}
We have shown that a Banach space  $F$ has the $\kappa_p$-approximation property if and only if $\mathcal K_p^{min}(E;F) \overset 1 = \mathcal K_p (E;F)$, for every Banach space $E$.
Suppose that $F$ has the approximation property and let $T \in \mathcal K_p (E;F)$. Using \cite[Theorem~3.1]{CK} we have the following factorization
$$
\xymatrix{
E \ar[rr]^{T} \ar[rd]_{T_0} & & F   \\
& G  \ar[ur]_{S} &
},
$$
where $T_0$ is $p$-compact and $S$ is compact (therefore approximable).
Now, by  \cite[Proposition~25.2 (1) b.]{DF}, $T$ belongs to $\mathcal K_p^{min}(E;F)$, which concludes the proof.
\end{proof}

Note that, in general,  the converse of Proposition~\ref{pa implica kp-pa},  is not true. For instance, if $1\le p<2$, we always may find a subspace $E \subset \ell_q$, $1< q<2$ without the approximation property. This subspace is reflexive and has cotype 2. Then, by the comment bellow  \cite[Proposition~21.7]{DF} and  applying \cite[Corollary~2.5]{DPS_adj} to obtain that $F=E'$ has the $\kappa_p$-approximation property and it fails to have the approximation property.

In this setting, the next theorem becomes quite natural. It states that the ideal of $p$-compact operators can be represented in terms of tensor products in presence of the $\kappa_p$-approximation property.

\begin{theorem}\label{Kp como tensor} Let $E$ and $F$ be Banach spaces. Then,
$$
E'\widehat \otimes_{/d_p}F \overset 1 = \mathcal K_p(E;F)
$$
if and only if $F$ has the $\kappa_p$-approximation property.
Also, the isometry remains valid whenever $E'$ has the approximation property, regardless of $F$.
\end{theorem}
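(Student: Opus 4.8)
The plan is to reduce the entire statement to a comparison between the minimal ideal $\mathcal K_p^{min}$ and $\mathcal K_p$, exploiting the identification $\mathcal K_p^{min}(E;F) \overset 1 = E'\widehat\otimes_{/d_p}F$ furnished by Remark~\ref{remark kp-min}. With this in hand, the tensor representation $E'\widehat\otimes_{/d_p}F \overset 1 = \mathcal K_p(E;F)$ is \emph{equivalent} to the isometric identity $\mathcal K_p^{min}(E;F)\overset 1 = \mathcal K_p(E;F)$, and both assertions of the theorem become statements about when this identity holds.

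For the stated equivalence I would simply combine Remark~\ref{remark kp-min} with the characterization of the $\kappa_p$-approximation property obtained right after Proposition~\ref{inclusiones isometricas}, namely that $F$ has the $\kappa_p$-approximation property precisely when $\mathcal K_p^{min}(E;F)\overset 1 = \mathcal K_p(E;F)$ for every Banach space $E$. Substituting $\mathcal K_p^{min}(E;F)\overset 1 = E'\widehat\otimes_{/d_p}F$ turns this into the asserted equivalence (with the identity understood for every $E$, as required by the definition of the property). No further work is needed here.

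The substance lies in the last sentence, where $E'$ has the approximation property but $F$ is arbitrary. Here I would argue through the factorization of Proposition~\ref{factor}: write a given $T\in\mathcal K_p(E;F)$ as $T=S\,T_0\,R$ with $R$ compact, $T_0$ $p$-compact and $S$ compact. Since $\mathcal K_p$ is an operator ideal, $W:=S\,T_0$ is again $p$-compact, so $T=W\,R$ with $W\in\mathcal K_p$ and $R$ compact. The key point is that, as $E'$ has the approximation property, every compact operator with domain $E$ is approximable; in particular $R\in\overline{\mathcal F}$. Composing a $p$-compact operator with an approximable operator on the right lands in the minimal kernel: by \cite[Proposition~25.2]{DF} (the version for approximable factors on the right, the right-hand analogue of the part invoked in Proposition~\ref{pa implica kp-pa}) we get $T\in\mathcal K_p^{min}(E;F)$.

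This shows $\mathcal K_p(E;F)\subseteq\mathcal K_p^{min}(E;F)$; together with the always-valid isometric inclusion $\mathcal K_p^{min}(E;F)\overset 1\hookrightarrow\mathcal K_p(E;F)$ of Proposition~\ref{inclusiones isometricas} this yields $\mathcal K_p^{min}(E;F)\overset 1 =\mathcal K_p(E;F)$, and hence $E'\widehat\otimes_{/d_p}F\overset 1 =\mathcal K_p(E;F)$ by Remark~\ref{remark kp-min}, regardless of $F$. The main obstacle I anticipate is the justification that the approximation property of the dual $E'$ forces the compact factor $R$ out of $E$ to be approximable, together with the correct bookkeeping of the left/right version of \cite[Proposition~25.2]{DF}; once these are pinned down the argument is routine.
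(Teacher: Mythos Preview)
Your proposal is correct and follows the paper's proof essentially line by line: the first equivalence is obtained by combining Remark~\ref{remark kp-min} with the characterization of the $\kappa_p$-approximation property derived just after Proposition~\ref{inclusiones isometricas}, and the second claim is handled via the factorization of Proposition~\ref{factor} (the paper writes it directly as $T=T_0R$, absorbing the compact left factor into the $p$-compact one exactly as you do with $W=ST_0$), the approximability of $R$ from the approximation property of $E'$, and the right-hand version \cite[Proposition~25.2 (2) b.]{DF}. Your anticipated obstacle is the standard fact that $E'$ having the approximation property forces every compact operator with domain $E$ to be approximable, which the paper also invokes without further comment.
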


\begin{proof} Note that $/ d_p$ is totally accessible (see the comments preceding Remark \ref{remark grossa}). Thus, the proof of the first claim is straightforward from Remark~\ref{remark kp-min}.

For the second statement, take $T\in  \mathcal K_p(E;F)$. By Proposition~\ref{factor}, $T=T_0R$ where $R$ is a compact operator and $T_0$ is $p$-compact. Now, using that $E'$ has the approximation property, $R$ is approximable by finite rank operators and an application of   \cite[Proposition~25.2 (2) b.]{DF} gives that $T\in  \mathcal K_p^{min}(E;F)$. Again, the result follows by  Remark~\ref{remark kp-min}.
\end{proof}

The next result improves \cite[Proposition~3.3]{DPS_dens}.

\begin{corollary} Let $E$ and $F$ be  Banach spaces such that $F$ has the $\kappa_p$-approximation property or $E'$ has the approximation property . Then, $\mathcal K_p(E;F)'\overset 1 = \mathcal I_{p'}(E';F')$, $1\le p\le \infty$.
\end{corollary}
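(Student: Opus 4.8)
The plan is to deduce the statement from the tensor representation of $\mathcal K_p(E;F)$ obtained just above, combined with the standard duality between finitely generated tensor norms and maximal operator ideals. Under either of the two hypotheses, Theorem~\ref{Kp como tensor} provides the isometry $\mathcal K_p(E;F)\overset 1 = E'\widehat\otimes_{/d_p}F$. Thus the computation of $\mathcal K_p(E;F)'$ reduces to computing the dual of the tensor product $E'\widehat\otimes_{/d_p}F$, and it is worth noting that no further approximation-property assumption will be needed beyond this identification: the remaining steps are unconditional, so the hypotheses of the corollary are inherited verbatim from Theorem~\ref{Kp como tensor}.

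Next I would invoke the representation theorem for maximal operator ideals \cite[17.5]{DF}: for any finitely generated tensor norm $\alpha$ and any Banach spaces $G,H$, the natural map yields the isometry $(G\widehat\otimes_\alpha H)'\overset 1 = \mathcal A(G;H')$, where $\mathcal A$ is the maximal operator ideal associated to the dual tensor norm $\alpha'$. Applying this with $G=E'$, $H=F$ and $\alpha=/d_p$ (which is finitely generated, indeed totally accessible, as recorded in the comments preceding Remark~\ref{remark grossa}) gives $\mathcal K_p(E;F)'\overset 1 = \mathcal A(E';F')$, with $\mathcal A$ the maximal operator ideal associated to $(/d_p)'$.

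It then remains to identify this ideal. Since $/d_p=g_{p'}'$ and a finitely generated tensor norm is the dual of its own dual, we get $(/d_p)'=(g_{p'}')'=g_{p'}$. The maximal operator ideal associated to $g_{p'}$ is precisely $\mathcal I_{p'}$, the ideal of $p'$-integral operators (equivalently, the maximal hull $\mathcal N_{p'}^{max}$ of the $p'$-nuclear operators, since passing to the maximal hull preserves the associated tensor norm $g_{p'}$). Hence $\mathcal A=\mathcal I_{p'}$ and $\mathcal K_p(E;F)'\overset 1 = \mathcal I_{p'}(E';F')$. The endpoint cases $p=1$ and $p=\infty$ follow by the same scheme, using the identifications $\mathcal I_1=\mathcal I$ and $\mathcal K_\infty=\mathcal K$.

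The points I expect to require the most care are purely orientational: fixing the direction of the duality so that the resulting ideal has domain $E'$ and range $F'$ rather than its transpose, and the bookkeeping of dual versus transpose tensor norms, which is exactly what turns the index $p$ into $p'$. Obtaining the isometric constant $1$ (and not merely an isomorphism) hinges on the representation theorem being an isometry, for which the finite generation of $/d_p$ is the relevant hypothesis; everything else is formal.
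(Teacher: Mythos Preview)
Your proposal is correct and follows essentially the same route as the paper: invoke Theorem~\ref{Kp como tensor} to identify $\mathcal K_p(E;F)$ with $E'\widehat\otimes_{/d_p}F$, then apply the standard duality between a tensor product and the maximal ideal associated to the dual tensor norm. The paper compresses the second step into a single citation of \cite[p.~174]{RYAN}, whereas you spell it out via the representation theorem \cite[17.5]{DF} together with the identification $(/d_p)'=(g_{p'}')'=g_{p'}$ and the fact that $g_{p'}$ is the tensor norm of $\mathcal I_{p'}$; the substance is identical.
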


\begin{proof} The proof is straightforward from  Theorem~\ref{Kp como tensor} and \cite[Pag 174]{RYAN}.
\end{proof}

Now, we compare $p$-compact and $q$-compact operators for certain classes of Banach spaces. We use some well known results stated for $p$-summing operators when the spaces involved are of finite cotype or  $\mathcal L_{q,\lambda}$-spaces, for some $q$.  Our results are stated in terms of $\mathcal K_p^{min}(E;F)$ but if $F$ has the $\kappa_p$-approximation property or $E'$ has the approximation property, by Theorem~\ref{Kp como tensor}, they  can be stated for $\mathcal K_p(E;F)$. First we need the following general result. As usual, for $s=\infty$, we consider $\mathcal L(X;Y)$ instead of $\Pi_s(X;Y)$ and $\overline{\mathcal F(Y;X)}$ instead of  $\mathcal K_s^{min}(Y;X)$.

\begin{theorem}\label{mega-teo} Let $X$ and $Y$ be Banach spaces such that for some $1\le r < s \le\infty$   $\Pi_r(X';Y')=\Pi_s(X';Y')$.  Then, $\mathcal K_s^{min}(Y;X)=\mathcal K_r^{min}(Y;X)$.

Moreover, if  $\pi_r (\cdot) \leq A \pi_s(\cdot)$ on $\Pi_s(X';Y')$  then $\kappa_r(\cdot) \leq A\kappa_s(\cdot)$ on $K_s^{min}(Y;X)$, $A>0$.
\end{theorem}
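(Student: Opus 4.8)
The plan is to transport the hypothesis, which concerns absolutely summing operators between the duals, to the maximal hulls $\mathcal K_r^{max}$ and $\mathcal K_s^{max}$, and then to descend to the minimal ideals using the fact that $\mathcal K_p^{min}$ is the closure of the finite rank operators in the maximal norm.

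First I would rewrite the hypothesis through Corollary~\ref{maxhull}. Since $\mathcal K_p^{max}(Y;X)\overset1=\Pi_p^{dual}(Y;X)$, an operator $T\colon Y\to X$ lies in $\mathcal K_p^{max}(Y;X)$ precisely when $T'\in\Pi_p(X';Y')$, and then the maximal norm satisfies $\kappa_p^{max}(T)=\pi_p(T')$. Under this correspondence the assumption $\Pi_r(X';Y')=\Pi_s(X';Y')$ is nothing but the equality of sets $\mathcal K_r^{max}(Y;X)=\mathcal K_s^{max}(Y;X)$. Because $r<s$, the inclusion theorem for $p$-summing operators already gives $\Pi_r(X';Y')\subseteq\Pi_s(X';Y')$ with $\pi_s\le\pi_r$; the reverse comparison is exactly the quantitative hypothesis $\pi_r\le A\pi_s$ (and, in the absence of a prescribed constant, it follows from the closed graph theorem applied to the identity between the two Banach ideal norms). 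Hence $\pi_r$ and $\pi_s$ are equivalent on the common space $\Pi_s(X';Y')$; equivalently, $\kappa_r^{max}$ and $\kappa_s^{max}$ are equivalent norms on the single underlying space $\mathcal K_r^{max}(Y;X)=\mathcal K_s^{max}(Y;X)$.

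Next I would invoke the identity $\mathcal K_p^{min}(Y;X)\overset1=\overline{\mathcal F(Y;X)}^{\kappa_p^{max}}$ established after Remark~\ref{remark kp-min}, which rests on the total accessibility recorded in Remark~\ref{remark grossa}. Thus $\mathcal K_r^{min}(Y;X)$ and $\mathcal K_s^{min}(Y;X)$ are the closures of the \emph{same} subspace $\mathcal F(Y;X)$ inside the \emph{same} underlying space, computed with the two equivalent norms $\kappa_r^{max}$ and $\kappa_s^{max}$. Equivalent norms induce the same topology and therefore the same closure, so the two minimal ideals coincide as sets, which proves the first assertion.

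Finally, for the quantitative part I would use that the inclusions $\mathcal K_p^{min}\hookrightarrow\mathcal K_p\hookrightarrow\mathcal K_p^{max}$ are isometric (Proposition~\ref{inclusiones isometricas}). For $T$ in the common minimal ideal this gives $\kappa_r(T)=\pi_r(T')$ and $\kappa_s(T)=\pi_s(T')$; since $T\in\mathcal K_s^{min}(Y;X)$ forces $T'\in\Pi_s(X';Y')$, the hypothesis applied to $T'$ yields $\kappa_r(T)=\pi_r(T')\le A\,\pi_s(T')=A\,\kappa_s(T)$, with no loss in the constant because every identification used is isometric. The case $s=\infty$ is covered by the stated conventions $\Pi_\infty=\mathcal L$ and $\mathcal K_\infty^{min}=\overline{\mathcal F}$, the same reasoning being applied to $\mathcal K_r^{max}(Y;X)=\mathcal L(Y;X)$. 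The only delicate point is the transition from equal summing classes to equivalent norms and the resulting coincidence of the two finite rank closures; once Corollary~\ref{maxhull} and Proposition~\ref{inclusiones isometricas} are in hand, the remainder is a direct assembly of isometric identifications.
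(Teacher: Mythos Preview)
Your proof is correct and takes a genuinely different route from the paper's. The paper works directly with tensor norms: it identifies $\mathcal K_p^{min}(Y;X)\overset1= Y'\widehat\otimes_{/d_p}X = X\widehat\otimes_{g_{p'}^*}Y'$ (via $(/d_p)^t = g_{p'}^*$), then uses the Embedding Lemma and the Embedding Theorem from \cite{DF} to inject $X\widehat\otimes_{g_{p'}^*}Y'\hookrightarrow X''\widehat\otimes_{g_{p'}^*}Y'\hookrightarrow\Pi_p(X';Y')$ isometrically; the hypothesis then gives $/d_s\le/d_r\le A\,/d_s$ on $Y'\otimes X$, and hence equality of the completions. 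You bypass the tensor diagram entirely by going through the maximal hull: the isometry $\kappa_p^{max}(T)=\pi_p(T')$ transports the hypothesis to equivalent norms on the common space $\mathcal K_r^{max}(Y;X)=\mathcal K_s^{max}(Y;X)$, and the identity $\mathcal K_p^{min}\overset1=\overline{\mathcal F}^{\kappa_p^{max}}$ then forces the two closures of $\mathcal F(Y;X)$ to coincide. Your argument is more streamlined once Corollary~\ref{maxhull}, Proposition~\ref{inclusiones isometricas}, and the closure identity are available, and it makes the logical skeleton (hypothesis on duals $\Rightarrow$ maximal hulls agree $\Rightarrow$ minimal hulls agree) very transparent; the paper's approach, by contrast, displays the underlying tensor-norm inequality explicitly and shows that the result is ultimately a comparison of $/d_r$ and $/d_s$ on the algebraic tensor product $Y'\otimes X$.
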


\begin{proof} Suppose that $\Pi_r(X';Y')=\Pi_s(X';Y')$. Since $\Pi_r$ is a maximal ideal and its associated tensor norms, $g_{r'}^*$ is totally accessible \cite[Corollary 21.1.]{DF} we have, by the Embedding Theorem \cite[17.6.]{DF}, $X''\widehat \otimes_{g_{r'}^*} Y' \overset 1\hookrightarrow \Pi_r(X';Y')$. Now, using the Embedding lemma \cite[13.3.]{DF} we have the following commutative diagram

$$
\xymatrix{
Y'\widehat \otimes_{/d_s} X = X\widehat \otimes_{g_{s'}^*} Y' \ar@{^{(}->}[r]^{\;\;\;\;\;\;\;\;\;\;\;\;1} \ar@{-->}[d]^{\leq \; A} & X''\widehat \otimes_{g_{s'}^*} Y' \ar[r]^{1} \ar@{^{(}->}[r]^{1} & \Pi_s(X';Y') \ar@{^{(}->}[d]^{\leq \; A}\\
Y'\widehat \otimes_{/d_r} X = X\widehat \otimes_{g_{r'}^*} Y' \ar@{^{(}->}[r]^{\;\;\;\;\;\;\;\;\;\;\;\;1 } & X''\widehat \otimes_{g_{r'}^*} Y'  \ar@{^{(}->}[r]^{1} & \Pi_r(X';Y')
}.
$$

Therefore, $/d_s \leq /d_r \leq A \;  /d_s$ on $Y' \otimes X$, which implies that $\mathcal K_s^{min}(Y;X)=\mathcal K_r^{min}(Y;X)$ and $\kappa_r(T) \leq A\kappa_s(T)$ for every $T \in K_s^{min}(Y;X)$.
\end{proof}

In order to compare the norm $\kappa_r(T)$ with $\|T\|$ or with $\kappa_s(T)$, we use the constants obtained in comparing  summing operators, taken from \cite{TJ}. Some of them  involve the Grothendieck constant $K_G$, the constant $B_r$  taken from Khintchine's inequality and $C_q(E)$ the $q$-cotype constant of $E$. With this notation and the theorem above we have the following results.

\begin{corollary}\label{l2 l1} Let $E$ and $F$ be Banach spaces such that $E$ is a   $\mathcal L_{2,\lambda'}$-space and $F$ is a  $\mathcal L_{\infty,\lambda}$-space. Then,  $\overline{\mathcal F(E;F)} = \mathcal K_1^{min}(E;F)$ and $\kappa_1(T) \leq K_G \lambda \lambda' \|T\|$ for every $T \in \overline{\mathcal F(E;F)}$.
\end{corollary}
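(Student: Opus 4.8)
The plan is to apply Theorem~\ref{mega-teo} with the identifications $X=F$ and $Y=E$, and with the exponents $r=1$ and $s=\infty$. Under the convention recorded just before Theorem~\ref{mega-teo}, the first conclusion $\mathcal K_s^{min}(Y;X)=\mathcal K_r^{min}(Y;X)$ then reads $\overline{\mathcal F(E;F)}=\mathcal K_1^{min}(E;F)$, since $\mathcal K_\infty^{min}(E;F)=\overline{\mathcal F(E;F)}$. Likewise $\kappa_\infty(T)=\m_\infty(T(B_E);F)=\sup_{x\in B_E}\|Tx\|=\|T\|$, so the quantitative `moreover' part will produce precisely the estimate $\kappa_1(T)\le K_G\lambda\lambda'\|T\|$ once the right value of $A$ is supplied.

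To verify the hypothesis of Theorem~\ref{mega-teo} I would first pass to duals via the standard duality of $\mathcal L_p$-spaces: as $F$ is a $\mathcal L_{\infty,\lambda}$-space, its dual $F'$ is a $\mathcal L_{1,\lambda}$-space, and as $E$ is a $\mathcal L_{2,\lambda'}$-space, $E'$ is again a $\mathcal L_{2,\lambda'}$-space. Hence the required identity $\Pi_1(X';Y')=\Pi_\infty(X';Y')$ becomes the assertion that every bounded operator from the $\mathcal L_{1,\lambda}$-space $F'$ into the $\mathcal L_{2,\lambda'}$-space $E'$ is absolutely $1$-summing, that is, $\Pi_1(F';E')=\mathcal L(F';E')$.

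This is exactly the $\mathcal L_p$-space form of Grothendieck's theorem: an operator from a $\mathcal L_1$-space into a Hilbert space is $1$-summing, and the quantitative version gives $\pi_1(u)\le K_G\lambda\lambda'\|u\|$ for every such $u$, the factor $\lambda\lambda'$ accounting for the $\mathcal L_1$- and $\mathcal L_2$-constants and $K_G$ being the Grothendieck constant; I would take this constant verbatim from the comparison of summing operators in \cite{TJ} rather than rederive it. Feeding $A=K_G\lambda\lambda'$ into the `moreover' part of Theorem~\ref{mega-teo} then yields $\kappa_1(T)\le K_G\lambda\lambda'\kappa_\infty(T)=K_G\lambda\lambda'\|T\|$ for every $T\in\overline{\mathcal F(E;F)}$, completing the argument. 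There is no genuine obstacle here, since the analytic content is already contained in Theorem~\ref{mega-teo}; the only points requiring care are the correct alignment of the $s=\infty$ conventions (namely $\Pi_\infty=\mathcal L$, $\kappa_\infty=\|\cdot\|$, and $\mathcal K_\infty^{min}=\overline{\mathcal F}$) and the invocation of Grothendieck's theorem with the precise constant $K_G\lambda\lambda'$.
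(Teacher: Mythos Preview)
Your proposal is correct and follows exactly the paper's approach: pass to duals to identify $F'$ as an $\mathcal L_{1,\lambda}$-space and $E'$ as an $\mathcal L_{2,\lambda'}$-space, invoke Grothendieck's theorem to obtain $\Pi_1(F';E')=\mathcal L(F';E')$ with constant $K_G\lambda\lambda'$, and then apply Theorem~\ref{mega-teo} with $r=1$, $s=\infty$. Your write-up is in fact more explicit than the paper's about the $s=\infty$ conventions, which is helpful.
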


\begin{proof} Note that $E$ is a  $\mathcal L_{2,\lambda'}$-space if and only if $E'$ is a  $\mathcal L_{2,\lambda'}$-space and $F$ is a  $\mathcal L_{\infty,\lambda}$-space if and only if $F'$ is a  $\mathcal L_{1,\lambda}$-space, see \cite[23.2 Corollary 1]{DF} and \cite[23.3]{DF}.  Now, use  Theorem~\ref{mega-teo} with  \cite[Theorem 23.10]{DF} or \cite[Theorem 10.11]{TJ}.
\end{proof}

\begin{corollary}\label{linfty} Let $E$ and $F$ be Banach spaces such that $F$ is a $\mathcal L_{1,\lambda}$-space. Then,
 \begin{enumerate}
\item[\rm (a)]  if $E'$ has cotype 2, $\overline{\mathcal F(E;F)} = \mathcal K_2(E;F) = \mathcal K_r^{min}(E;F)$, for all $2\le r$ and 
$$
\kappa_r(T)\le \lambda\left[c\, C_2(E')^2\left(1+\log C_2(E')\right)\right]^{1/r}\|T\|,
$$
for all $T \in \mathcal K_r^{min}(E;F)$.

\item[\rm (b)]  if $E'$ has cotype $q$, $2< q < \infty$, $\overline{\mathcal F(E;F)} = \mathcal K_r^{min}(E;F)$ for all $q<r<\infty$ and
$$
\kappa_r(T)\leq \lambda \ c \ q^{-1}(1/q-1/r)^{-1/r'} C_q(E') \|T\|,
$$
for all $T \in \mathcal K_r^{min}(E;F)$.
\end{enumerate}
In each case, $c>0$ is a universal constant.
\end{corollary}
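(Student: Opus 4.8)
The plan is to deduce both parts from Theorem~\ref{mega-teo}, exactly as in the proof of Corollary~\ref{l2 l1}, by transferring to the ideal of $p$-compact operators the known comparison theorems for $p$-summing operators from $\mathcal L_\infty$-spaces into spaces of finite cotype. First I would rewrite the hypotheses on $E$ and $F$ as hypotheses on the duals that appear in Theorem~\ref{mega-teo}: since $F$ is a $\mathcal L_{1,\lambda}$-space, its dual $F'$ is a $\mathcal L_{\infty,\lambda}$-space (\cite[23.2 Corollary 1]{DF}, \cite[23.3]{DF}), while $E'$ has cotype $2$ in case (a) and cotype $q$ in case (b) by hypothesis. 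Thus the relevant object is the space $\Pi_r(F';E')$ of $r$-summing operators from the $\mathcal L_{\infty,\lambda}$-space $F'$ into the cotype-$q$ space $E'$, which is precisely the slot $\Pi_r(X';Y')$ of Theorem~\ref{mega-teo} with $X=F$ and $Y=E$.

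Next I would invoke the classical summing estimates for such operators, with the explicit constants collected in \cite{TJ}. In case (a), $E'$ has cotype $2$, so every bounded operator from the $\mathcal L_{\infty,\lambda}$-space $F'$ into $E'$ is $2$-summing, and hence $r$-summing for every $r\ge 2$ since $\Pi_2\subset\Pi_r$; the corresponding $r$-summing norm estimate, read off from \cite{TJ}, takes the form $\pi_r(T)\le \lambda\,[c\,C_2(E')^2(1+\log C_2(E'))]^{1/r}\|T\|$, the factor $\lambda$ accounting for the $\mathcal L_{\infty,\lambda}$ (rather than $C(K)$) structure of $F'$. Consequently $\Pi_r(F';E')=\mathcal L(F';E')$ for all $r\ge 2$. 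In case (b), where $E'$ has cotype $q$ with $2<q<\infty$, the analogous Maurey-type theorem gives that every bounded operator $F'\to E'$ is $r$-summing for each $r>q$, with $\pi_r(T)\le \lambda\, c\, q^{-1}(1/q-1/r)^{-1/r'}C_q(E')\|T\|$, so that $\Pi_r(F';E')=\mathcal L(F';E')$ for every $q<r<\infty$.

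With these identities in hand I would apply Theorem~\ref{mega-teo} with $X=F$, $Y=E$ and $s=\infty$; recall that for $s=\infty$ the convention preceding that theorem reads $\Pi_\infty(F';E')=\mathcal L(F';E')$ and $\mathcal K_\infty^{min}(E;F)=\overline{\mathcal F(E;F)}$, while $\pi_\infty=\|\cdot\|$. Since we have just shown $\Pi_r(F';E')=\Pi_\infty(F';E')$ over the stated range of $r$, Theorem~\ref{mega-teo} yields $\mathcal K_r^{min}(E;F)=\overline{\mathcal F(E;F)}$ together with the bound $\kappa_r(T)\le A\|T\|$, where $A$ is exactly the constant appearing in the summing estimate of the previous paragraph. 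This gives both displayed inequalities. Finally, for case (a) it remains to insert $\mathcal K_2(E;F)$ into the chain: since every Banach space has the $\kappa_2$-approximation property, $\mathcal K_2(E;F)=\mathcal K_2^{min}(E;F)$, which closes the equalities $\overline{\mathcal F(E;F)}=\mathcal K_2(E;F)=\mathcal K_r^{min}(E;F)$.

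The routine part is the translation through Theorem~\ref{mega-teo}; the delicate point is locating in \cite{TJ} the summing estimates with exactly the constants claimed and verifying that the passage from a $C(K)$-space to a general $\mathcal L_{\infty,\lambda}$-space contributes only the factor $\lambda$. In particular the logarithmic term $(1+\log C_2(E'))$ in (a) and the blow-up factor $(1/q-1/r)^{-1/r'}$ as $r\downarrow q$ in (b) have to be extracted from the sharp quantitative versions of these cotype-to-summing theorems, and I expect this bookkeeping of constants, rather than the structural argument, to be the main obstacle.
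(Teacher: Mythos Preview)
Your proposal is correct and follows essentially the same route as the paper: dualize the hypothesis on $F$ to make $F'$ an $\mathcal L_{\infty,\lambda}$-space, invoke the quantitative summing estimates from \cite{TJ} (the paper cites Theorem~10.14 combined with Proposition~10.16 for (a) and Theorem~21.4\,(ii) for (b)) to get $\Pi_r(F';E')=\mathcal L(F';E')$ with the stated constants, and then apply Theorem~\ref{mega-teo} with $s=\infty$; the insertion of $\mathcal K_2(E;F)$ in (a) via the universal $\kappa_2$-approximation property is exactly what the paper does as well.
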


\begin{proof} Again,  $F$ is a  $\mathcal L_{\infty,\lambda}$-space if and only if $F'$ is a  $\mathcal L_{1,\lambda}$-space. For the first statement, note that every space has the $\kappa_2$-approximation property,  $\mathcal K_2^{min}(E;F)= \mathcal K_2(E;F)$. Now, use Theorem~\ref{mega-teo} with the combination of
\cite[Theorem 10.14]{TJ} and \cite[Proposition 10.16]{TJ}. For the second claim, use  Theorem~\ref{mega-teo} and \cite[Theorem 21.4 (ii)]{TJ}.
\end{proof}

\begin{corollary}\label{nohipo} Let $E$ and $F$ be Banach spaces.  Then,
 \begin{enumerate}
\item[\rm (a)] if $E'$ has cotype $2$, $\mathcal K_r^{min}(E;F) = \mathcal K_2(E;F)$, for all $2\le r < \infty$  and $$
\kappa_2(T) \leq B_r C_2(E') \kappa_r(T),
$$ 
for every $T \in \mathcal K_r^{min}(E;F)$. 

\item[\rm (b)]  if $F'$ has cotype $2$,  $\mathcal K_2^{min}(E;F) = \mathcal K_1^{min}(E;F)$, for all $E$ and
$$
\kappa_1(T)\leq c\, C_2(F') (1+\log C_2(F'))^{1/2}\kappa_2(T),
$$
for every $T \in \mathcal K_2^{min}(E;F)$. 

\noindent In particular, for all $1\le r \le 2$, $\mathcal K_r^{min}(E;F) = \mathcal K_1^{min}(E;F)$, for all $E$. 

\item[\rm (c)]  if $F'$ has cotype $q$, $2< q < \infty$, $\mathcal K_r^{min}(E;F) = \mathcal K_1^{min}(E;F)$, for all $1\le r < q'$ and for all $E$, and
$$
\kappa_1(T)\leq c\, q^{-1}(1/q-1/r')^{-1/r} C_q(F') \kappa_r(T),
$$
for every $T \in \mathcal K_r^{min}(E;F)$.
\end{enumerate}

In each statement $c>0$ is a universal constant.

Note that if $E'$ and $F'$ have cotype 2,  $\mathcal K_r^{min}(E;F) = \mathcal K_1^{min}(E;F)$, for all $1\le r < \infty$.
\end{corollary}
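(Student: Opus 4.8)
The plan is to derive all three parts as applications of Theorem~\ref{mega-teo}. Reading an operator $T\in\mathcal L(E;F)$ through its transpose, I would set $Y=E$ and $X=F$ in that theorem, so that the ideal governing every coincidence is $\Pi_\bullet(F';E')$; here $F'$ plays the role of the domain and $E'$ that of the range. The scheme is then always the same: translate the cotype hypothesis into a coincidence $\Pi_r(F';E')=\Pi_s(F';E')$ of summing operators with a controlled estimate $\pi_r(\cdot)\le A\,\pi_s(\cdot)$, feed it into Theorem~\ref{mega-teo}, and read off $\kappa_r(\cdot)\le A\,\kappa_s(\cdot)$ on the corresponding minimal ideal of $p$-compact operators. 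The constants $A$ are exactly those recorded in \cite{TJ} on the summing side.

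For (a) the cotype-$2$ hypothesis falls on the range $E'$, so I would invoke the coincidence for operators into a cotype-$2$ space, $\Pi_r(F';E')=\Pi_2(F';E')$ for every $2\le r<\infty$ with $\pi_2(S)\le B_r\,C_2(E')\,\pi_r(S)$. Theorem~\ref{mega-teo} then gives $\mathcal K_r^{min}(E;F)=\mathcal K_2^{min}(E;F)$ together with $\kappa_2\le B_rC_2(E')\kappa_r$, and to finish I would replace $\mathcal K_2^{min}$ by $\mathcal K_2$ using that every Banach space enjoys the $\kappa_2$-approximation property.

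Parts (b) and (c) are the dual situation, with the cotype hypothesis now on the domain $F'$; note that, since the cotype of $F'$ controls neither $F$ nor $F''$, one cannot reduce these to (a) by transposition, and the domain-cotype coincidences of \cite{TJ} must be used directly. For (c), with $F'$ of cotype $q$, $2<q<\infty$, I would use $\Pi_1(F';E')=\Pi_r(F';E')$, valid for $1\le r<q'$ with the constant $c\,q^{-1}(1/q-1/r')^{-1/r}C_q(F')$, which Theorem~\ref{mega-teo} transports to the stated bound on $\kappa_1$ over $\mathcal K_r^{min}(E;F)$. Part (b) is the borderline $q=2$ of this family: there $q'=2$, the open range $r<q'$ collapses, and the constant $(1/q-1/r')^{-1/r}$ blows up as $r\uparrow q'$, so one cannot simply pass to the endpoint. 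This is the main obstacle, and I expect it to be resolved by the sharper logarithmic estimate for cotype-$2$ domains, giving $\Pi_1(F';E')=\Pi_2(F';E')$ with $\pi_1(S)\le c\,C_2(F')(1+\log C_2(F'))^{1/2}\pi_2(S)$ and hence the asserted bound on $\kappa_1$. The \emph{in particular} clause then follows by monotonicity: for $1\le r\le 2$ one has $\mathcal K_1^{min}(E;F)\subseteq\mathcal K_r^{min}(E;F)\subseteq\mathcal K_2^{min}(E;F)$, and the two extremes coincide by (b).

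Finally, the closing remark combines (a) and (b): when both $E'$ and $F'$ have cotype $2$, part (a) gives $\mathcal K_r^{min}(E;F)=\mathcal K_2^{min}(E;F)$ for all $2\le r<\infty$, part (b) gives $\mathcal K_2^{min}(E;F)=\mathcal K_1^{min}(E;F)$, and its \emph{in particular} clause covers $1\le r\le 2$; chaining these equalities yields $\mathcal K_r^{min}(E;F)=\mathcal K_1^{min}(E;F)$ for every $1\le r<\infty$. The only genuinely delicate point throughout is the cotype-$2$ endpoint in (b); the rest is bookkeeping, transcribing the summing-operator constants of \cite{TJ} through Theorem~\ref{mega-teo}.
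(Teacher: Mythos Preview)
Your proposal is correct and matches the paper's own proof: each part is obtained by feeding the appropriate coincidence theorem for $p$-summing operators from \cite{TJ} (range cotype $2$ for (a), domain cotype $2$ with the logarithmic bound for (b), domain cotype $q$ for (c)) into Theorem~\ref{mega-teo} with $X=F$, $Y=E$. Your additional remarks on the $\kappa_2$-approximation property, the ``in particular'' clause, and the final combination of (a) and (b) are also correct and simply spell out what the paper leaves implicit.
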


\begin{proof} Use  Theorem~\ref{mega-teo} and 
\cite[Theorem~10.15]{TJ} for part (a).  For (b) use
\cite[Corollary~10.18 (a)]{TJ}. Finally, use  Theorem~\ref{mega-teo} with 
\cite[Corollary~21.5 (i)]{TJ} for the third claim.
\end{proof}

We finish this section by showing that the conditions considered on $r$ in the corollaries above  are sharp. We make use of the notion of limit order \cite[Chapter 14]{PIE}, which has proved useful, specially to compare different operator ideals. Recall that for an operator ideal $\mathcal A$,  the limit order $\lambda(\mathcal A, u, v)$ is defined to be the infimum of all $\lambda\ge 0$ such that the diagonal operator $D_\lambda$ belongs to $\mathcal{A}(\ell_u; \ell_v)$, where $D_\lambda\colon (a_n)\mapsto (n^{-\lambda}a_n)$ and $1\le u,v\le \infty$. 

\begin{lemma}\label{limit_order} Let  $1\le u,v,p \le \infty$ and $u', v', p'$ the respective conjugates. Then,
$$
\lambda (\mathcal K_p, u,v) = \lambda (\Pi_p, v', u').
$$
\end{lemma}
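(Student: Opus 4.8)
The plan is to exploit the duality between $p$-compact and $p$-summing operators, which the paper has already established: by Corollary~\ref{T Cuasi} together with the relation $\mathcal{QN}_p = \mathcal{N}_p^{inj}$, the key fact is that $T \in \mathcal{K}_p(E;F)$ is governed by the behavior of its adjoint $T'$ as a ($p$-summing-type) operator. More directly, since $\mathcal{K}_p^{max} \overset{1}{=} \Pi_p^{dual}$ by Corollary~\ref{maxhull}, and limit orders are a maximal-ideal invariant (they are computed on the fixed sequence spaces $\ell_u, \ell_v$, so taking minimal or maximal hull does not change them), the limit order $\lambda(\mathcal{K}_p, u, v)$ should coincide with the limit order of $\Pi_p^{dual}$ evaluated at $(u,v)$. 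Thus the heart of the matter is to translate the dual-ideal relationship into the diagonal-operator language.

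First I would unwind the definitions. The diagonal operator $D_\lambda \colon (a_n) \mapsto (n^{-\lambda} a_n)$ maps $\ell_u \to \ell_v$, and $D_\lambda \in \mathcal{K}_p(\ell_u; \ell_v)$ iff, by Corollary~\ref{T Cuasi} (or via $\mathcal{K}_p^{dual} \overset{1}{=} \mathcal{QN}_p$), the adjoint $(D_\lambda)' \in \mathcal{QN}_p(\ell_{v'}; \ell_{u'})$. The crucial observation is that the adjoint of a diagonal operator is again essentially the same diagonal operator, now acting $\ell_{v'} \to \ell_{u'}$: concretely $(D_\lambda)' = D_\lambda \colon \ell_{v'} \to \ell_{u'}$ under the canonical identifications $(\ell_u)' = \ell_{u'}$, $(\ell_v)' = \ell_{v'}$. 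So I would reduce the membership question to: $D_\lambda \in \mathcal{QN}_p(\ell_{v'}; \ell_{u'})$.

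Next I would replace $\mathcal{QN}_p$ by $\Pi_p$ for the purpose of computing the limit order. Since on the diagonal operators between $\ell$-spaces the membership in $\mathcal{QN}_p = \mathcal{N}_p^{inj}$ and in $\Pi_p$ should agree up to the values of $\lambda$ that matter (the injective hull and the quasi-nuclear refinement do not alter the infimal exponent $\lambda$ for diagonal operators on sequence spaces — this is the kind of statement for which the limit orders of $\mathcal{N}_p^{inj}$ and $\Pi_p$ coincide), I would conclude $\lambda(\mathcal{QN}_p, v', u') = \lambda(\Pi_p, v', u')$. Assembling these identifications gives
$$
\lambda(\mathcal{K}_p, u, v) = \lambda(\mathcal{QN}_p, v', u') = \lambda(\Pi_p, v', u'),
$$
which is exactly the claimed equality.

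The main obstacle I anticipate is justifying cleanly the step $\lambda(\mathcal{QN}_p, v', u') = \lambda(\Pi_p, v', u')$, i.e. that passing to the injective hull (and to quasi-$p$-nuclear operators) does not change the limit order on diagonal operators. One safe route is to avoid $\mathcal{QN}_p$ altogether and argue directly through $\mathcal{K}_p^{max} \overset{1}{=} \Pi_p^{dual}$: the limit order of any ideal equals that of its maximal hull (since the diagonal operators live in a fixed setting and $\lambda$ is insensitive to the maximal completion), so $\lambda(\mathcal{K}_p, u, v) = \lambda(\Pi_p^{dual}, u, v)$, and then the elementary fact that for the dual ideal $\lambda(\mathcal{A}^{dual}, u, v) = \lambda(\mathcal{A}, v', u')$ — which follows from $D_\lambda \in \mathcal{A}^{dual}(\ell_u;\ell_v) \iff (D_\lambda)' = D_\lambda \in \mathcal{A}(\ell_{v'}; \ell_{u'})$ — finishes the proof. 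I would verify carefully that the limit order is indeed invariant under taking the maximal hull, as this is the one place where a genuine (though standard) argument is needed rather than a formal manipulation.
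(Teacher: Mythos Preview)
Your second route---use $\mathcal{K}_p^{max}\overset{1}{=}\Pi_p^{dual}$ (Corollary~\ref{maxhull}) together with $(D_\lambda)'=D_\lambda\colon\ell_{v'}\to\ell_{u'}$---is exactly the paper's approach. The only difference is in how the ``maximal hull does not change the limit order'' step is handled: rather than arguing abstractly that $\lambda(\mathcal{K}_p,u,v)=\lambda(\mathcal{K}_p^{max},u,v)$, the paper invokes \cite[Theorem~14.4.3]{PIE}, which expresses the limit order of any Banach ideal in terms of the asymptotics of the ideal norm of the identities $id\colon\ell_u^n\to\ell_v^n$ on finite-dimensional spaces. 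Since on finite-dimensional spaces $\mathcal{K}_p$ and $\mathcal{K}_p^{max}$ coincide isometrically, Corollary~\ref{maxhull} gives $\kappa_p(id_{u,v})=\pi_p(id_{v',u'})$ for each $n$, and the lemma follows immediately. This bypasses your flagged obstacle cleanly; your abstract route works too (for $\lambda'>\lambda$ factor $D_{\lambda'}=D_{(\lambda'-\lambda)/2}D_\lambda D_{(\lambda'-\lambda)/2}$ through approximable diagonals to land in $\mathcal{K}_p^{min}$), but the reference to Pietsch is shorter.
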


\begin{proof} Denote by $id_{u,v}$ the identity map from $\ell_u^n$ to $\ell_v^n$, for a fixed integer $n$. By Corollary~\ref{maxhull} we have
$$ \kappa_p(id_{u,v} \colon \ell_u^n\to \ell_v^n)  = \pi_p(id_{v',u'}  \colon \ell_{v'}^n\to \ell_{u'}^n).$$
Then, a direct application of \cite[Theorem~14.4.3]{PIE} gives the result.
\end{proof}

Now we have:

\begin{result}  The conditions imposed on $r$ in Corollary~\ref{linfty} and  Corollary~\ref{nohipo} are sharp.
\end{result}

\begin{enumerate}
\item Let $E=\ell_u$ and $F=\ell_1$. Note that (see Appendix (a) and (b))
$$
\lambda (\mathcal K_r, u,1) =
\begin{cases}
 1 -\frac 1u & \quad \text{if}\quad  r'\le u\le\infty,\\
\frac 1r   & \quad \text{if}\quad   \quad 1\le u\le r'.
\end{cases}
$$
Fixed $1\le r  < 2$ choose $2< u <r'$, then $E'$ has cotype 2 and  $\lambda (\mathcal K_r, u,1) =\frac 1r \neq \frac 1{u'} = \lambda (\mathcal K_2, u,1)$.  Thus,  $\mathcal K_r(\ell_u;\ell_1) \ne \mathcal K_{2}(\ell_u;\ell_1)$ and $r$ cannot be included in Corollary~\ref{linfty} (a), whenever $1\le r<2$. 
\smallskip

\noindent Now, fix  $2<q$ and let $E=\ell_{q'}$. Then $E'$ has cotype $q$ and given $r<q$,  we see that $\lambda (\mathcal K_r, q',1) = \frac 1r$. On the other hand, $\lambda (\mathcal K_s, q',1) = \frac 1q$ for any $q<s$. This shows that $\mathcal K_r(\ell_{q'};\ell_1)\neq \mathcal K_{s}(\ell_{q'};\ell_1)$ for any $r< q<s$. 

\noindent Note that we have also shown that if $r <\tilde r\le q$, then   $\lambda (\mathcal K_{\tilde r}, q',1) \ne \lambda (\mathcal K_r, q',1)$. Therefore, the inclusions $\mathcal K_{\tilde r}(\ell_{q'},\ell_1) \subset \mathcal K_{r}(\ell_{q'},\ell_1) $ are always strict, for any  $r <\tilde r\le q$.
\vskip .3cm

\noindent For the case $r=q$, $2< q < \infty$, take $E=L_{q'}[0,1]=L_{q'}$ and $F=L_1[0,1]=L_1$. Suppose that $\overline{\mathcal F(L_{q'};L_1)} = \mathcal K_q(L_{q'};L_1)$. By Theorem~\ref{Kp como tensor},  $L_q\widehat  \otimes_{/d_q}L_1=L_q\widehat \otimes_{\eps}L_1$ and $L_1\widehat  \otimes_{(/d_q)^t}L_q=L_1\widehat \otimes_{\eps}L_q$.  Since $(/d_q)^t=(d_{q'})'$, and $\pi'=\eps$, then $L_1\widehat \otimes_{d_{q'}'}L_q=L_1\widehat\otimes_{\pi'}L_q$ and 
we get that $(L_1\widehat \otimes_{d_{q'}'}L_q)'=(L_1\widehat\otimes_{\pi'}L_q)'$. Since both $L_\infty$ and $L_{q'}$  have the metric approximation property, by \cite[17.7]{DF} and \cite[12.4]{DF}, we have the isomorphism  $L_{\infty}\widehat\otimes_{d_{q'}}L_{q'}=L_{\infty}\widehat\otimes_{\pi}L_{q'}$. Therefore $(L_{\infty}\widehat\otimes_{d_{q'}}L_{q'})'=(L_{\infty}\widehat\otimes_{\pi}L_{q'})'$. In other words, $\Pi_q(L_{\infty},L_q)=\mathcal L(L_{\infty},L_q)$ (see \cite[Section 6.3]{RYAN}). This last equality contradicts~\cite[Theorem 7]{Kwa}.  
\smallskip

\item For any $1\leq p < \infty$, there exits a compact operator  in $\mathcal L(\ell_p;\ell_p)$ (and therefore approximable), which is not $p$-compact \cite[Example 3.1]{AMR}. Thus, $\overline{\mathcal F(\ell_p;\ell_p)}\ne \mathcal K_p (\ell_p;\ell_p)$.

\noindent Fix $p\geq 2$,  for all $2\le r < \infty$, we see that 
$\mathcal K_r^{min} (\ell_p;\ell_p)=  \mathcal K_p^{min} (\ell_p;\ell_p) =  \mathcal K_p (\ell_p;\ell_p) = \mathcal K_2 (\ell_p;\ell_p)$,  Corollary~\ref{nohipo}~(a).  Then, $r=\infty$ cannot be included in the first statement of this corollary. 
\smallskip

\noindent Also, for $r< 2$, we may choose $p$ and $q$ such that $2\leq p \leq r'$ and $1\leq q\leq r$. Now, with 
$E=\ell_p$ and $F=\ell_q$ we compute the limit orders  (see Appendix) obtaining $\lambda (\mathcal K_r, p,q)= \frac 1r$ and $\lambda (\mathcal K_2, p,q)= \frac 12$, we conclude that the inclusion  $\mathcal K_r(\ell_p;\ell_q)\subset \mathcal K_2(\ell_p;\ell_q)$ is strict.
\smallskip

\item To see that the choice of $r$ in Corollary~\ref{nohipo} (b) is sharp, fix $r>2$. Take $p$ and $q$ such that $2\leq q <r$ and $1\leq p \leq r'$. Let $E=\ell_p$ and $F=\ell_{q'}$, using the limit orders we obtain  $\lambda (\mathcal K_2, p,{q'})= \frac 12$ and $\lambda (\mathcal K_r, p,{q'})= \frac 1r$ (see Appendix (b)). Thus, $\mathcal K_2(\ell_p;\ell_{q'})\neq\mathcal K_r(\ell_p;\ell_{q'})$.

\noindent Here, we  have also shown that if $2\le r <\tilde r$,  the inclusions $\mathcal K_{\tilde r}(\ell_p;\ell_{q'}) \subset \mathcal K_{r}(\ell_p;\ell_{q'}) $ are strict, for suitable $p$ and $q$.
\smallskip

\item Now, we focus our attention on Corollary~\ref{nohipo} (c). Fix $2<q$ and let $E=\ell_1$ and $F=\ell_{q'}$. We claim that $\mathcal K_r(\ell_1,\ell_{q'}) \neq \mathcal K_1(\ell_1, \ell_{q'})$, for any $q'<r$.  In fact, the result follows using the limit orders: $\lambda (\mathcal K_1, 1,{q'})= \frac 1{q'}$ and $\lambda (\mathcal K_r, 1,{q'})= \frac 1r$. This also shows that 
$\mathcal K_{\tilde r}(\ell_1;\ell_{q'})$ is strictly contained in  $\mathcal K_{r}(\ell_1;\ell_{q'})$  for any $q'\le r < \tilde r$.
\smallskip

\noindent  Finally, we deal with the remaining case, $r=q'$. Take $E=L_1[0,1] =L_{1}$,  $F=L_{q'}[0,1] =L_{q'}$ and suppose that $\mathcal K_{q'}(L_1;L_{q'})=\mathcal K_{1}(L_1;L_{q'})$,  $2<q<\infty$. Applying Theorem~\ref{Kp como tensor} we get that $L_{\infty}\widehat\otimes_{g'_q}L_{q'}=L_{\infty}\widehat\otimes_{g'_{\infty}}L_{q'}$. Thus, the tensor spaces  have isomorphic duals.  By \cite[17.7]{DF} and \cite[13.3]{DF} we obtain the isomorphism $L_1\widehat\otimes_{g_q}L_q=L_1\widehat\otimes_{g_{\infty}}L_q$. 
Since $g_{\infty}= \backslash\eps$ and $g_q=  \backslash g_{q'}^{*}$, by \cite[Corollary 1 20.6]{DF}, $L_1\widehat\otimes_{g_{q'}^*}L_q=L_1\widehat\otimes_{\eps}L_q$. As shown in part (1), this cannot happen.
\end{enumerate}

\section{Appendix}
In this section we  transcribe the  values of  $\lambda (\Pi_r, v', u')$, computed in  Pietsch's monograph, to give $\lambda(\mathcal K_r, u, v)$. To this end, we combine Propositions 22.4.9, 22.4.12 and 22.4.13 in \cite{PIE}.

\begin{enumerate}
\item[\rm (a)] For $1\le r\le 2$,
$$
\lambda (\mathcal K_r, u,v) =
\begin{cases}
\frac 1r & \quad \text{if}\quad  1\le v \le r, \quad 1\le u\le r',\\
1 -\frac 1u & \quad \text{if}\quad  1\le v \le r, \quad r'\le u\le\infty,\\
\frac 1v   & \quad \text{if}\quad  r\le v \le 2, \quad 1\le u\le v',\\
1 -\frac 1u & \quad \text{if}\quad  r\le v \le 2, \quad v'\le u\le\infty,\\
\frac 1v   & \quad \text{if}\quad   2\le v \le \infty,  \quad 1\le u\le 2,\\
\frac 12  - \frac1u +\frac1v& \quad \text{if}\quad  2\le v \le \infty, \quad 2\le u\le\infty.\\
\end{cases}
$$

\item[\rm (b)] For $2 <r < \infty$,
$$
\lambda (\mathcal K_r, u,v) =
\begin{cases}
\frac 1r & \quad \text{if}\quad  1\le v \le r, \quad 1\le u\le r',\\
1-\frac 1u   & \quad \text{if}\quad  1\le v\le 2, \quad r'\leq u \leq \infty,\\
\rho   & \quad \text{if}\quad  2\le v \le r, \quad r'\le u\le 2,\\
\frac 1v & \quad \text{if}\quad  r\le v \le \infty, \quad 1\le u\le2,\\
\frac 12  - \frac1u +\frac1v& \quad \text{if}\quad  2\le v\le\infty,\quad 2\leq u \leq \infty,\\
\end{cases}
$$

\noindent where $\d \rho=\frac 1r + \frac {(\frac 1v -\frac 1r)(\frac 1{r'}- \frac 1u)}{\frac 12 - \frac 1r}$.
\end{enumerate}

\end{document}